\theoremstyle{plain}
\newtheorem{lema}{Lemma}
\newtheorem{prop}[lema]{Proposition}
\newtheorem{teo}[lema]{Theorem}
\newtheorem*{intro1}{Theorem \ref{teo2conexo}}
\newtheorem*{intro2}{Theorem \ref{main}}
\newtheorem*{intro3}{Corollary \ref{proba}}
\newtheorem{coro}[lema]{Corollary}
\theoremstyle{remark}
\newtheorem{obs}[lema]{Remark}
\theoremstyle{definition}
\newtheorem{defi}[lema]{Definition}
\newtheorem{ej}[lema]{Example}
\newcommand{\Z}{\mathbb{Z}}
\newcommand{\N}{\mathbb{N}}
\newcommand{\ost}{\overset{\circ}{\textrm{st}}}
\newcommand{\st}{\textrm{st}}
\newcommand{\lk}{\textrm{lk}}
\begin{document}

\title[Connectivity of ample, conic and random simplicial complexes]{Connectivity of ample, conic and random simplicial complexes}

\author[J.A. Barmak]{Jonathan Ariel Barmak $^{\dagger}$}

\thanks{$^{\dagger}$ Researcher of CONICET. Partially supported by grant PICT-2017-2806, PIP 11220170100357CO, UBACyT 20020160100081BA}

\address{Universidad de Buenos Aires. Facultad de Ciencias Exactas y Naturales. Departamento de Matem\'atica. Buenos Aires, Argentina.}

\address{CONICET-Universidad de Buenos Aires. Instituto de Investigaciones Matem\'aticas Luis A. Santal\'o (IMAS). Buenos Aires, Argentina. }

\email{jbarmak@dm.uba.ar}

\begin{abstract}
A simplicial complex is $r$-conic if every subcomplex of at most $r$ vertices is contained in the star of a vertex. A $4$-conic complex is simply connected. We prove that an $8$-conic complex is $2$-connected. In general a $(2n+1)$-conic complex need not be $n$-connected but a $6^n$-conic complex is $n$-connected. This extends results by Even-Zohar, Farber and Mead on ample complexes and answers two questions raised in their paper. Our results together with theirs imply that the probability of a complex being $n$-connected tends to $1$ as the number of vertices tends to $\infty$. Our model here is the medial regime. 
\end{abstract}

\subjclass[2010]{55U10, 57Q15, 60C05, 05E45, 52B22.}

\keywords{Ample simplicial complexes, $n$-connected complexes, random complexes.}

\maketitle

%\footnotesize
%%\small
%\tableofcontents
%\normalsize

\section{Introduction} \label{content}

Let $r\in \N$. A non-empty simplicial complex $K$ with vertex set $V_K$ is said to be $r$-ample if for each subset $U\subseteq V_K$ of at most $r$ vertices, and for each subcomplex $L\leqslant K$ with $V_L\subseteq U$, there exists a vertex $v\in K$ such that the simplices in $\lk_K(v)$ contained in $U$ are exactly those of $L$. In other words the link $\lk_{K(U\cup \{v\})}(v)$ of $v$ in the full subcomplex of $K$ induced by $U\cup \{v\}$ is $L$. A complex which is $r$-ample for every $r\ge 1$ is said to be $\infty$-ample. Of course, an $\infty$-ample complex must be infinite and it is proved in \cite{FMS} that up to an isomorphism there is a unique one such complex $K$ with countable many vertices. This complex has the following resilience property: any subcomplex of $K$ obtained by removing finitely many simplices is isomorphic to $K$.

%The $r$-ampleness properties can be seen as approximations to $\infty$-ampleness and 
For every $r\ge 1$ there are examples of finite complexes which are $r$-ample. Probabilistic arguments have been used to prove their existence and also deterministic constructions are available. The motivation behind this notion are potential applications in network science. A biological, social or computational system can be studied through the interaction among their components. For many years these systems have been modeled by graphs, taking into account only the interactions between pairs. More recently ecological, neurological and social systems have been modeled by different structures, in order to consider the interactions which occur among several units. For instance, when studying the spread of a disease in a community the transmission is usually assumed to occur through pairwise interaction. However for some diseases this is not as simple, and the exposure of a healthy individual to several infectious people has to be considered. Also, in social contagion phenomena, like diffusion of rumors or adoption of norms, the transmission needs one person to be in contact with multiple sources (see \cite{Iac}). A structure which models multiple interactions is given by simplicial complexes. In particular, complex dynamical systems can be modeled in this way and Topology has become part of the new multidisciplinary field of Network Science. For a comprehensive report of the state-of-the-art of complex networks beyond pairwise interaction see \cite{BCI}.

Being a finite approximation to $\infty$-ampleness one expects $r$-ample complexes to enjoy some resilience property as well, and it is proved in \cite{EZFM} that removing a ``small'' part from an $r$-ample complex leaves an $(r-k)$-ample complex for some $k\ge 0$ (on which the definition of ``small'' depends). For this reason ample complexes can be useful to model stable networks.

Ampleness is related to connectivity. For instance, $2$-ample complexes are connected. In fact if $v,w$ are vertices of a $2$-ample complex, there has to be a third vertex adjacent to both. It is not hard to prove that $4$-ampleness implies simple connectivity. This is proved in \cite{EZFM} and an example is given which shows that $2$-ampleness does not guarantee simple connectivity. The authors of \cite{EZFM} say that they do not know examples of $3$-ample complexes which are not simply connected. In \cite[Theorem 4.2]{EZFM} it is proved that an $18$-ample complex is $2$-connected. In the end of Section 4 in \cite{EZFM} the authors implicitly state a conjecture: ``We tend to believe that in general, for every $k\ge 1$ there exists $r(k)$ such that every $r$-ample simplicial complex is $k$-connected provided that $r\ge r(k)$. We know that $r(1)\le 4$ and $r(2) \le 18$.''  

The first result of this article is that for each $n\ge 0$ there is an infinite $(2n+1)$-ample simplicial complex which is not $n$-connected, and in particular $3$-ampleness does not imply simple connectivity. The most important results of our paper are a proof of a strong version of the conjecture stated above and an improvement of the bound $r(2)\le 18$, together with their consequences to the connectivity of random simplicial complexes.

We introduce the notion of an $r$-conic simplicial complex which is weaker than $r$-ampleness. A simplicial complex is $r$-conic if every subcomplex of at most $r$ vertices is contained in the closed star of some vertex. We prove the following results

\begin{intro1} 
Every $8$-conic simplicial complex is $2$-connected.
\end{intro1}

\begin{intro2}
Let $n\in \N$. If a simplicial complex is $6^{n}$-conic, then it is $n$-connected.
\end{intro2}

One remarkable corollary that follows from Theorem \ref{main} and a result of Even-Zohar, Farber and Mead in \cite{EZFM} concerns the connectivity of random simplicial complexes. In the last 15 years Random Topology has grown driven by real life and pure mathematics applications. On one hand randomness models nature. A variety of random network models have been used to describe biological, technological and social systems. Most of them use graphs (not higher dimensional structures) and various complexities of randomness \cite{Sci, Nat}. On the other hand the probabilistic method can be used to prove existence results (see \cite{Kahleviejo}). A general question involving random complexes has this form: how does a particular homotopy/combinatorial invariant behaves in a random complex on $n$ vertices when $n\to \infty$. Different invariants like homology groups, homotopy groups, collapsibility, embeddability, asphericity, have been studied for distinct models of randomness: the Linial-Meshulam-Wallach model, the clique complex model by Kahle, geometric models based on the Vietoris-Rips or the \v Cech complex. Arguably a more natural model, which generalizes the first two models above is the multiparameter model (mentioned by Kahle in \cite{Kahleviejo} and first studied by Costa and Farber in \cite{CF}), in which simplices are added successively in each dimension with some probability. A complete survey on random complexes can be found in \cite{Kahlenuevo}. If the probability $p_{\sigma}$ of each simplex being added in the multiparameter model lies in an interval $(\epsilon, 1-\epsilon)$ for every $\sigma$, we are in the presence of what Farber and Mead called the \textit{medial regime} in \cite{FM}. It is proved there that in the medial regime the probability of a complex being simply connected tends to $1$ as $n\to \infty$. Also, the Betti numbers of a random simplicial complex vanish in dimension smaller than or equal to a fixed dimension $d$ with probability $1$ as $n\to \infty$. Nothing is proved there about torsion in homology in degrees below $d$. Finally in \cite{EZFM} it is proved that a random complex is $2$-connected with probability $1$ as $n\to \infty$.

As a direct application of Theorem \ref{main} and a result by Even-Zohar, Farber and Mead in \cite{EZFM} we will deduce the following

\begin{intro3}
Let $d\ge 0$. In the medial regime the probability of a complex in $n$ vertices being $d$-connected tends to $1$ as $n\to \infty$. 
\end{intro3}

\section{Simple connectivity and conic complexes}
 
Throughout the paper we will assume the reader is familiar with basic notions and results of Algebraic Topology, such as homotopy groups, homology, the Mayer-Vietoris sequence, the Hurewicz theorem and the fact that a map $S^k\to X$ extends to $D^{k+1}$ if and only if it is null-homotopic; and of polyhedra, such as the edge-path group of a simplicial complex, links, stars, combinatorial manifolds, pseudomanifolds, homology manifolds, simplicial approximations, barycentric subdivisions of regular CW-complexes, shellability. Standard references for these are \cite{Gla, Mun, Spa}, but we will include more specific references when needed. Sometimes we will distinguish between simplicial complexes or maps and their geometric realizations but other times we will identify them.

\begin{teo} \label{contraejemplo}
For every $n\ge 0$ there exists an infinite simplicial complex $K$ which is $(2n+1)$-ample and which is not $n$-connected. In particular, $3$-ampleness does not imply simple connectivity.  
\end{teo}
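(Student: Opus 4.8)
The plan is to construct, for each fixed $n$, an explicit infinite simplicial complex that is both $(2n+1)$-ample and has nontrivial homotopy in dimension $n$, so that it fails to be $n$-connected. The natural candidate is a complex whose homotopy type is that of a sphere $S^n$ (or something with nontrivial $\pi_n$ or $H_n$), but which has been ``thickened'' combinatorially so that all small links look generic. Concretely, I would try to start from a known $\infty$-ample complex $K_\infty$ on countably many vertices—whose existence and resilience are recalled in the introduction—and modify it, or take a suitable join/product construction, so as to keep ampleness up to order $2n+1$ while forcing a nonzero $\pi_n$ or $H_n$.

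First I would recall that the defining condition for $r$-ampleness only constrains links on vertex subsets $U$ of size at most $r$; this is a purely local, finitary condition. The key observation to exploit is that ampleness of order $r$ is insensitive to global topology: one has great freedom to arrange the complex's homotopy type while respecting all link-completion requirements on subsets of size $\le r$. So the strategy is to build $K$ as an amalgam or iterated construction where (a) every potential link configuration on $\le 2n+1$ vertices is realized by some cone vertex, guaranteeing $(2n+1)$-ampleness, and (b) there is an $n$-cycle (a map $S^n\to K$) that cannot be filled, witnessing the failure of $n$-connectivity. A clean way to obstruct filling is to produce a nontrivial class in $H_n(K;\Z)$ or in $\pi_n(K)$; since the ampleness conditions are local and can always be satisfied by adding new vertices with prescribed links, the homology in a fixed degree can be kept nonzero by controlling which higher simplices are \emph{not} present.

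The main technical step, and the hardest part, will be verifying that the two requirements are compatible: that enforcing the homotopical obstruction in degree $n$ does not inadvertently force us to add simplices that would destroy $(2n+1)$-ampleness, and conversely that completing all $\le (2n+1)$-vertex links does not kill the nontrivial class. I expect the right bookkeeping to be an inductive/back-and-forth construction: enumerate all pairs $(U,L)$ with $|U|\le 2n+1$ and $L$ a subcomplex on a subset of $U$, and at each stage add a fresh vertex whose link meets $U$ exactly in $L$, while carefully choosing its higher-dimensional incidences to avoid filling the chosen $n$-cycle. Because each new vertex involves only finitely many of the constrained subsets and the obstruction class can be detected by a fixed finite subcomplex together with a cohomological (or covering-space, in the $n=1$ case) certificate, the local completions and the global obstruction can be kept independent. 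The borderline numerology ``$2n+1$'' should emerge from a dimension count: a subset of $2n+1$ vertices cannot by itself detect the $n$-dimensional obstruction, which morally needs on the order of $2n+2$ vertices to be ``closed off,'' and this is exactly the slack that lets ampleness of order $2n+1$ coexist with the failure of $n$-connectivity.

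For the final clause, that $3$-ampleness does not imply simple connectivity, I would simply specialize to $n=1$: the construction yields a $3$-ample complex that is not $1$-connected, i.e.\ not simply connected, which is the promised example answering the question from \cite{EZFM}. In the $n=1$ case the obstruction is cleanest to exhibit concretely, since a nontrivial element of the fundamental group (equivalently a nontrivial edge loop in the edge-path group) can be certified by a covering space or by a homomorphism to a nonabelian group, and one checks directly that completing all links on $\le 3$ vertices never trivializes that loop.
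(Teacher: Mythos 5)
Your proposal is a strategy outline rather than a proof, and the step you defer (``verifying that the two requirements are compatible'') is precisely where all the mathematical content lies; worse, the heuristic you offer for why $2n+1$ is the right threshold is false for general simplicial complexes. First, note that ampleness leaves you none of the freedom you invoke: if $L\leqslant K$ is \emph{any} subcomplex on at most $2n+1$ vertices, then $(2n+1)$-ampleness applied to the pair $(V_L,L)$ forces the existence of a vertex $v$ whose link restricted to $V_L$ is exactly $L$; in particular the cone $vL$ is forced to be a subcomplex of $K$. So every $n$-cycle supported on at most $2n+1$ vertices is automatically null-homologous in a $(2n+1)$-ample complex, and you cannot ``carefully choose higher-dimensional incidences to avoid filling'' such cycles --- the filling is mandatory. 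Second, your dimension count (``an $n$-dimensional obstruction morally needs on the order of $2n+2$ vertices to be closed off'') is wrong for arbitrary complexes: $\partial \Delta^{n+1}$ is a nontrivial $n$-cycle on only $n+2\leq 2n+1$ vertices (for $n\geq 1$). Hence the real issue, which your back-and-forth scheme never addresses, is to prove that the chosen obstruction class can never become homologous, at any stage, to a cycle supported on at most $2n+1$ vertices; absent that, the completions that ampleness forces will kill it.

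The paper resolves exactly this point, and the resolution shows where the number $2n+2$ honestly comes from: it is a statement about \emph{flag (clique)} complexes, not about vertex counts in general. The construction starts with $K_0=(S^0)^{*(n+1)}$, the minimal flag triangulation of $S^n$ on $2n+2$ vertices, and builds $K$ by iteratively coning over \emph{all} subcomplexes with at most $2n+1$ vertices; this yields $(2n+1)$-ampleness with no bookkeeping whatsoever. To see that $[K_0]$ survives in $H_n(K)$, the paper passes to clique closures: if some cone attachment killed $[K_0]$, then by Mayer--Vietoris the class $[K_0]$ would lie in the image of $H_n(L)$ for a clique subcomplex $L$ on at most $2n+1$ vertices, and Meshulam's theorem (\cite[Theorem 1.1]{Mes}) asserts that a clique complex on fewer than $2n+2$ vertices has trivial homology in degree $n$ --- a contradiction. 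Without this passage to flag complexes (or some substitute playing the same role), your ``cohomological certificate'' has no reason to persist through the forced completions, so the proposal has a genuine gap at its central step.
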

\begin{proof}
Let $K_0=(S^0) ^{* (n+1)}$ be the join of $n+1$ copies of the discrete complex on $2$ vertices. This is an $n$-dimensional sphere. For each $i\ge 0$, the complex $K_{i+1}$ is constructed from $K_i$ by attaching a cone with basis $L$ for every subcomplex $L$ of $K_i$ with at most $2n+1$ vertices. It is clear then that $K=\bigcup\limits_{i\ge 0} K_i$ is $(2n+1)$-ample. We claim that the fundamental class $[K_0]$ is nontrivial in $H_n(K)$, and in particular $K$ is not $n$-connected. We prove something stronger: $[K_0]$ is nontrivial in the clique closure $c(K)$ of $K$, which is obtained from $K$ by adding as simplices all the finite subsets of vertices which are pairwise adjacent in $K$. Suppose that $[K_0]$ is trivial in $H_n(c(K))$. Since $K_0$ is clique ($c(K_0)=K_0$) then there exists $i\ge 0$ such that $[K_0]$ is nontrivial in $H_n(c(K_i))$ but trivial in $H_n(c(K_{i+1}))$. Moreover, since $c(K_{i+1})$ is obtained from $c(K_i)$ by adding finitely many vertices, there is a clique complex $c(K_i)\leqslant C< c(K_{i+1})$ and a clique subcomplex $L\leqslant C$ with at most $2n+1$ vertices such that $[K_0]$ is nontrivial in $H_n(C)$ but trivial in the $n$th homology group of $C\cup vL$, the complex obtained from $C$ by attaching a cone with basis $L$. By the Mayer-Vietoris sequence $$H_n(L)\to H_n(C) \to H_n(C\cup vL),$$ $[K_0]\in H_n(C)$ is in the image of the map $H_n(L)\to H_n(C)$ induced by the inclusion. But a clique complex with less than $2n+2$ vertices has trivial homology in degree $n$ (see for instance \cite[Theorem 1.1]{Mes}), a contradiction.
\end{proof}

For $n=1$ the proof shows that there is a $3$-ample complex containing a $4$-cycle which is nontrivial in $H_1$. 

%Indeed, if $H_1(K)\neq 0$ for a $3$-ample complex, one such cycle must exist as the next result shows.

%\begin{prop}
%Let $K$ be a $3$-ample complex in which every $4$-cycle is trivial in homology. Then $K$ is simply connected. 
%\end{prop} 

%\begin{figure}[h] 
%\begin{center}
%\includegraphics[scale=1.2]{four2.eps}
%\caption{.}\label{cuatro}
%\end{center}
%\end{figure}

\begin{defi}
Let $r\in \Z_{\ge -1}$. We say that a simplicial complex $K$ is \textit{$r$-conic} if every subcomplex $L\leqslant K$ of at most $r$ vertices is contained in a simplicial cone, or, equivalently, in the (closed) star $\st_K (v)$ of a vertex $v\in K$.
\end{defi}

Of course, $r$-amplness implies $r$-conicity. Note that by definition every complex is $(-1)$-conic and a complex is $0$-conic if and only if it is non-empty, and $1$-conicity is equivalent to $0$-conicity.

\begin{ej}
A simplex is $r$-conic for every $r$. Moreover, a finite complex is $r$-conic for every $r$ if and only if it is a cone.
\end{ej}

\begin{ej}
The Cs\'asz\'ar polyhedron is a triangulation of the torus which is $2$-conic.
\end{ej}

It is proved in \cite{EZFM} that an $r$-ample simplicial complex must have at least $2^{O(\frac{2^r}{\sqrt{r}})}$ vertices and it is not easy to construct explicit examples. On the other hand there are examples of $r$-conic complexes for vertex sets of arbitrary cardinality and we can build many using the following remark.

\begin{obs} \label{join}
The join $K_1*K_2$ of an $r_1$-conic complex $K_1$ and an $r_2$-conic complex $K_2$ is $(r_1+r_2+1)$-conic. Indeed, this is trivial if either $K_1$ or $K_2$ is empty. Otherwise, if $L$ is a subcomplex of $K_1*K_2$ with at most $r_1+r_2+1$ vertices, then it is a subcomplex of a join $L_1*L_2$ with the same vertex set and $L_i\leqslant K_i$ for $i=1,2$. Without loss of generality assume $L_1$ has at most $r_1$ vertices. Then $L_1\leqslant \st_{K_1}(v)$ for some $v\in K_1$ and then $L\leqslant L_1*L_2\leqslant \st_{K_1*K_2}(v)$. 
\end{obs}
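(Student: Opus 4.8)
The plan is to reduce the statement to the conicity of the two factors by a pigeonhole argument on how the vertices of a subcomplex are distributed between $K_1$ and $K_2$. First I would dispose of the degenerate case in which one factor, say $K_1$, is empty: the empty complex is $(-1)$-conic but not $0$-conic, so the hypothesis that $K_1$ is $r_1$-conic forces $r_1=-1$; since $K_1*K_2=K_2$ and $r_1+r_2+1=r_2$, the claim reduces to $K_2$ being $r_2$-conic, which holds by assumption. Hence I may assume both factors are non-empty.

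For the main case, fix a subcomplex $L\leqslant K_1*K_2$ with at most $r_1+r_2+1$ vertices and partition its vertex set as $V_L=(V_L\cap V_{K_1})\sqcup(V_L\cap V_{K_2})$. The pigeonhole step is that not both parts can be large: if $|V_L\cap V_{K_1}|\ge r_1+1$ and $|V_L\cap V_{K_2}|\ge r_2+1$, then $L$ would have at least $r_1+r_2+2$ vertices, a contradiction. So one part, say $V_L\cap V_{K_1}$, has at most $r_1$ vertices. Writing $L_i$ for the full subcomplex of $K_i$ spanned by $V_L\cap V_{K_i}$, each simplex of $L$ decomposes as a face of $K_1$ inside $V_L\cap V_{K_1}$ joined to a face of $K_2$ inside $V_L\cap V_{K_2}$, whence $L\leqslant L_1*L_2$ with $L_1\leqslant K_1$ spanning at most $r_1$ vertices.

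The technical heart is the identity $\st_{K_1*K_2}(v)=\st_{K_1}(v)*K_2$ for a vertex $v\in K_1$, which I would check directly: a simplex $\tau_1\cup\tau_2$ with $\tau_i\in K_i$ lies in the star of $v$ on the left exactly when $\tau_1\cup\{v\}\in K_1$, i.e. when $\tau_1\in\st_{K_1}(v)$, with no condition imposed on $\tau_2$. Now $r_1$-conicity of $K_1$ applied to $L_1$ produces a vertex $v\in K_1$ with $L_1\leqslant\st_{K_1}(v)$, and then $L\leqslant L_1*L_2\leqslant\st_{K_1}(v)*K_2=\st_{K_1*K_2}(v)$, which is the desired conclusion.

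I do not expect a serious obstacle here; the only points requiring care are bookkeeping ones. The degenerate empty-factor case must be handled so that the index $r_1+r_2+1$ is reproduced exactly, and the star-of-join identity should be verified rather than assumed. The crucial asymmetry is that the whole of $K_2$, not merely a star inside $K_2$, appears in $\st_{K_1*K_2}(v)$; this is precisely what allows the pigeonhole reduction to invoke conicity of only one factor.
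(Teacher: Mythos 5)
Your proof is correct and follows essentially the same route as the paper's: the pigeonhole split of $V_L$ between the two factors, the reduction $L\leqslant L_1*L_2$ with $L_1$ small, and the inclusion $L_1*L_2\leqslant \st_{K_1*K_2}(v)$. You merely make explicit two points the paper leaves implicit — the identity $\st_{K_1*K_2}(v)=\st_{K_1}(v)*K_2$ and the fact that an empty factor forces $r_i=-1$, so that case is indeed trivial — which is sound bookkeeping but not a different argument.
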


\begin{ej}
Let $K=(S^0)^{*(n+1)}$. By Remark \ref{join}, $K$ is a finite complex which is $(2n+1)$-conic and $|K|$ is homeomorphic to $S^n$ (in particular $(n-1)$-connected and not $n$-connected). This can be compared with the construction in Theorem \ref{contraejemplo}. The complex $K$ is now finite.
\end{ej}

%coro con esferas, tomando suspensiones. definir 0-conic. coro contractil si k wample para todo k
%joind the conic es conic.
%si k ample implica n conexo, entonces k wample tambien?

A $4$-ample simplicial complex is simply connected, but this holds for $4$-conic complexes as well. This can be deduced from \cite{EZFM} already, but we give a different proof here. Even though the proof is easy, we include it as a first example of one of the main differences with \cite{EZFM}: the combinatorial definition of $n$-connectivity used there is sometimes hard to handle, so we will use the original definition, a space $X$ is $n$-connected if for every $0\le k\le n$, $\pi_k(X)=0$, or equivalently $X$ is non-empty and each continuous map $S^k\to X$ is null-homotopic. For the case $n=1$ we will use the description of the fundamental group of a simplicial complex given by the edge-path group (see \cite{Spa} for definitions). Alternatively one could take an arbitrary map $S^1\to |K|$, a simplicial approximation $\varphi:L\to K$ for a cycle graph $L$, and copy the proof below by induction in the number of vertices of $L$ to show that $\varphi$ is null-homotopic.

\begin{prop}
A $4$-conic simplicial complex is simply connected.
\end{prop}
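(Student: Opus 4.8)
The plan is to prove the two halves of simple connectivity separately, in each case extracting a vertex whose closed star absorbs a small subcomplex. Connectivity is immediate: a $4$-conic complex is in particular $2$-conic, so any two vertices $u,v$ span a subcomplex on at most two vertices and hence both lie in $\st_K(w)$ for a common $w$; since $\tau\in\st_K(w)$ means exactly $\tau\cup\{w\}\in K$, the pairs $\{u,w\}$ and $\{v,w\}$ are edges, so $u$ and $v$ are joined by a path and $K$ is connected and non-empty. The substance is $\pi_1(K)=0$, and I would work entirely inside the edge-path group $E(K,v_0)$, showing that every edge loop based at $v_0$ is equivalent to the constant loop.

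The main tool is a length reduction, carried out by induction on the number $m$ of edges of a loop $\gamma=v_0v_1\cdots v_m$ with $v_m=v_0$. For $m\le 2$ the loop is constant or a backtrack and is already trivial. For $m\ge 3$ I consider the four consecutive vertices $v_0,v_1,v_2,v_3$ (when $m=3$ the last of these is again $v_0$). The subcomplex $L\leqslant K$ consisting of the edges $v_0v_1$, $v_1v_2$, $v_2v_3$ has at most four vertices, so $4$-conicity provides a vertex $w$ with $L\leqslant\st_K(w)$; by the description of the star this says precisely that $\{v_0,v_1,w\}$, $\{v_1,v_2,w\}$ and $\{v_2,v_3,w\}$ are all simplices of $K$. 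Using these three triangles to justify elementary edge-path moves, I rewrite $v_0v_1v_2v_3\sim v_0v_1wv_2v_3\sim v_0wv_2v_3\sim v_0wv_3$, first inserting $w$ between $v_1$ and $v_2$ (reverse move, valid since $\{v_1,w,v_2\}\in K$), then deleting $v_1$ (valid since $\{v_0,v_1,w\}\in K$), then deleting $v_2$ (valid since $\{w,v_2,v_3\}\in K$). Splicing this back into $\gamma$ replaces a segment of three edges by the segment $v_0wv_3$ of two edges, yielding an equivalent loop, still based at $v_0$, with $m-1$ edges; by the inductive hypothesis that loop is trivial, hence so is $\gamma$.

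Two points guide the write-up. First, the bound $4$ cannot be lowered: the join $(S^0)^{*2}$ is a $3$-conic triangulation of $S^1$, so $3$-conicity does not force simple connectivity, and indeed the reduction genuinely needs the fourth vertex, since coning off only the triple $v_0v_1v_2$ gives $v_0v_1v_2\sim v_0wv_2$, which merely moves $v_1$ to $w$ without shortening the loop. Second, the only place demanding care is the bookkeeping of the elementary moves: one must check that each step uses an honest simplex of $K$, including the degenerate cases where some of $v_0,\dots,v_3,w$ coincide (as happens when $m=3$ and the window closes up), and that the spliced loop remains based at $v_0$ so the induction stays within $E(K,v_0)$. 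This is the expected obstacle, but it is routine rather than deep; the same idea of coning off a bounded window of consecutive vertices to contract a sphere is what I would anticipate reappearing, with larger windows, in the higher-connectivity theorems.
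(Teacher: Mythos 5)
Your proof is correct: each elementary move you perform is licensed by an honest simplex of $K$ (the containment $L\leqslant \st_K(w)$ yields exactly the three triangles $\{v_0,v_1,w\}$, $\{v_1,v_2,w\}$, $\{v_2,v_3,w\}$), the degenerate coincidences are harmless, and the induction terminates. The route differs from the paper's in the inductive mechanism. The paper also argues in the edge-path group by induction on length, but its engine is divide-and-conquer: for $n\ge 6$ it uses only $2$-conicity to produce a vertex adjacent to $v_0$ and $v_3$ and splits the loop into two shorter closed loops; for $n=5$ it uses $3$-conicity on the subcomplex generated by $v_0$ and the edge $v_2v_3$ to split into two loops of length $4$; full $4$-conicity enters only in the base case $n\le 4$, where the entire loop lies in a single star. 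You instead apply $4$-conicity uniformly at every step, coning off a window of three consecutive edges and rewriting it as two edges through the cone vertex, shortening the loop by one edge; this eliminates all case analysis and makes the base case trivial. What the paper's version buys is a precise accounting of which level of conicity does which job ($2$- and $3$-conicity drive the induction, $4$-conicity only kills short loops); what yours buys is uniformity, and, notably, closer kinship with the paper's higher-dimensional technique in Theorems \ref{teo2conexo} and \ref{main}, where a small disk (there, a star cluster) is replaced by the cone over its boundary via a starring --- your window replacement is precisely the $1$-dimensional instance of that move, as you anticipate in your closing remark.
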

\begin{proof}
We will prove that the edge-path group is trivial. Given a closed edge-path $\xi=(v_0,v_1)(v_1,v_2)\ldots (v_{n-1},v_0)$, if $n\le 4$, it is contained in the star of a vertex, so it is trivial in the edge-path group. If $n=5$, we use only $3$-conicity to show that the subcomplex generated by $v_0$ and $v_2v_3$ is in the star of a vertex $v$. By induction $\xi_1=(v_0,v_1)(v_1,v_2)(v_2,v)(v,v_0)$ and $\xi_2=(v_0,v)(v,v_3)(v_3,v_4)(v_4,v_0)$ are trivial, so $\xi\sim \xi_1\xi_2$ is trivial. For $n\ge 6$, the $2$-conicity implies that there is a vertex adjacent to $v_0$ and $v_3$, so $\xi$ is equivalent to a concatenation of two shorter closed edge-paths.
\end{proof}

\section{Starrings and $2$-connectivity}

\begin{defi}
Let $r\in \N$, let $M$ be a simplicial complex and let $D$ be a subcomplex of $M$ with at most $r$ vertices which is a triangulation of a disk $D^d$. In particular $D$ is a pseudomanifold (see \cite{Spa} for background on pseudomanifolds). Let $\partial D$ be its boundary. Suppose that every coface of a simplex of $D$ not in $\partial D$ is already in $D$. Then there is a simplicial complex $\widetilde{M}$ obtained from $M$ by removing all the simplices of $D$ which are not in $\partial D$ and attaching a cone $w\partial D$ for some vertex $w$ not in $M$. We say that $\widetilde{M}$ has been obtained from $M$ by an \textit{$r$-starring} and that $D$ can be \textit{$r$-starred} in $M$. If by a sequence of $r$-starrings we can transform $M$ into a complex $L$, we say that $M$ \textit{$r$-stars} to $L$.
\end{defi}

The notion of a starring at a combinatorial (=PL) ball $D$ in the sense of Alexander \cite[Section V]{Ale} is an $r$-starring in our sense provided $D$ has at most $r$ vertices.

\begin{obs}
If $M$ is a combinatorial (=PL) manifold of dimension $n$ and $D\leqslant M$ is a triangulation of the $n$-disk $D^n$, then every coface of a simplex in $D$ and not in $\partial D$ is in $D$. That is, we do no need to check this condition to prove that $D$ can be starred. This will always be the case in the results below.

Indeed, since $M$ is a combinatorial manifold, a $k$-simplex $\sigma \in  D \smallsetminus \partial D$ satisfies that $\lk_M(\sigma)$ is an $(n-k-1)$-(combinatorial) sphere. On the other hand since $|D|$ is a (topological) $n$-manifold with boundary $|\partial D|$, $(|D|,|\partial D|)$ is a relative homology $n$-manifold and $\lk_D(\sigma)$ has the homology of an $(n-k-1)$-sphere (see \cite[Theorem 63.2]{Mun}). Any proper subcomplex of the sphere $\lk_M (\sigma)$ has trivial homology in degree $n-k-1$, thus $\lk_D(\sigma)=\lk_M(\sigma)$ as we wanted to prove.

The remark remains true if $M$ is just a triangulation of an $n$-manifold, but we will not need that here. In that case $\lk_M (\sigma)$ has the homology of $S^{n-k-1}$ and it is also a homology $(n-k-1)$-manifold, so it is a pseudomanifold (see \cite{Mun}, p. 377) and then also here a proper subcomplex of $\lk_M(\sigma)$ has trivial homology in degree $n-k-1$.
\end{obs}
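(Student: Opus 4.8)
The plan is to reformulate the statement in terms of links: for a simplex $\sigma \in D \smallsetminus \partial D$, the cofaces of $\sigma$ in $M$ are exactly the joins $\sigma * \tau$ with $\varnothing \neq \tau \in \lk_M(\sigma)$, and such a coface lies in $D$ precisely when $\tau \in \lk_D(\sigma)$. So the claim ``every coface of $\sigma$ is in $D$'' is equivalent to the equality $\lk_D(\sigma) = \lk_M(\sigma)$. Since $D \leqslant M$ gives the inclusion $\lk_D(\sigma) \leqslant \lk_M(\sigma)$ for free, the entire problem reduces to ruling out that $\lk_D(\sigma)$ is a \emph{proper} subcomplex of $\lk_M(\sigma)$.

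To do this I would pin down the homology of each link in the relevant top degree. Writing $k = \dim \sigma$, the fact that $M$ is a combinatorial $n$-manifold gives that $\lk_M(\sigma)$ is a combinatorial $(n-k-1)$-sphere. For the inner link, since $|D|$ is a triangulated $n$-disk, the pair $(|D|,|\partial D|)$ is a relative homology $n$-manifold; the hypothesis that $\sigma$ avoids $\partial D$ means $\sigma$ is an interior simplex, so the local homology of $|D|$ at a point of $\sigma$ is that of $\mathbb{R}^n$ rather than of a half-space. Feeding this into the link computation for homology manifolds (as in \cite[Theorem 63.2]{Mun}) yields that $\lk_D(\sigma)$ has the homology of an $(n-k-1)$-sphere. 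The only consequence I actually need is the nonvanishing $H_{n-k-1}(\lk_D(\sigma)) \neq 0$.

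The decisive step is then purely combinatorial. A triangulated $(n-k-1)$-sphere is a pseudomanifold, and any proper subcomplex of a connected pseudomanifold carries no class in its top homology, i.e. has trivial $H_{n-k-1}$. Thus if $\lk_D(\sigma)$ were strictly contained in $\lk_M(\sigma)$, we would get $H_{n-k-1}(\lk_D(\sigma)) = 0$, contradicting the nonvanishing established above. Hence $\lk_D(\sigma) = \lk_M(\sigma)$, and every coface of $\sigma$ in $M$ already belongs to $D$, which is what we wanted.

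The main obstacle I anticipate is the middle step: correctly extracting $H_{n-k-1}(\lk_D(\sigma)) \neq 0$ from the fact that $\sigma$ is interior. This is where the relative homology-manifold structure of $(|D|,|\partial D|)$ must be used with care, since the whole argument hinges on an interior simplex seeing the full local homology of $\mathbb{R}^n$ (a boundary simplex would instead produce a contractible link with vanishing top homology, and the conclusion would genuinely fail). By comparison, the identification of $\lk_M(\sigma)$ as a combinatorial sphere and the vanishing of top homology for a proper subcomplex of a pseudomanifold are standard and routine.
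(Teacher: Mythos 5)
Your proposal is correct and follows essentially the same route as the paper: identify $\lk_M(\sigma)$ as a combinatorial $(n-k-1)$-sphere, use the relative homology manifold structure of $(|D|,|\partial D|)$ together with \cite[Theorem 63.2]{Mun} to get $H_{n-k-1}(\lk_D(\sigma))\neq 0$, and conclude via the fact that a proper subcomplex of the sphere (a pseudomanifold) has vanishing top homology. The only differences are cosmetic — you make explicit the reduction of the coface condition to the link equality $\lk_D(\sigma)=\lk_M(\sigma)$, and you spell out the pseudomanifold justification that the paper reserves for its remark on general triangulated manifolds.
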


As usual, if two maps $f,g$ are homotopic we write $f\simeq g$.

\begin{lema} \label{disco}
Let $r\in \N$ and let $K$ be an $r$-conic complex. Let $M$ be a simplicial complex and $\varphi: M\to K$ a simplicial map. If $\widetilde{M}$ is obtained from $M$ by an $r$-starring, then there is a homeomorphism $h:|\widetilde{M}| \to |M|$ and a simplicial map $\widetilde{\varphi}:\widetilde{M}\to K$ such that $|\varphi|h \simeq |\widetilde{\varphi}|$. In particular $|\varphi|$ is null-homotopic if and only if $|\widetilde{\varphi}|$ is null-homotopic.  
\end{lema}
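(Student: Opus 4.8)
The plan is to leave everything outside the starred disk untouched and to produce the homeomorphism, the new map, and the homotopy all locally, over the disk. Let $N$ be the subcomplex of $M$ obtained by deleting the simplices of $D$ that are not in $\partial D$; the coface hypothesis in the definition of a starring guarantees that $N$ is indeed a subcomplex, that $M=N\cup D$ with $N\cap D=\partial D$, and that $\widetilde M=N\cup w\partial D$ with $N\cap w\partial D=\partial D$. Now $|D|$ and $|w\partial D|$ are two disks sharing the boundary sphere $|\partial D|$. Choosing a homeomorphism of pairs $\alpha\colon(|D|,|\partial D|)\to(D^d,S^{d-1})$ and coning its boundary restriction over the cone structure of $|w\partial D|$, I obtain (Alexander's trick) a homeomorphism $h'\colon|w\partial D|\to|D|$ that is the identity on $|\partial D|$. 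Setting $h$ to equal $h'$ on $|w\partial D|$ and the identity on $|N|$ defines the required homeomorphism $h\colon|\widetilde M|\to|M|$.

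The construction of $\widetilde\varphi$ is where $r$-conicity enters, and the key is to apply it to the whole disk rather than only to $\partial D$. Since $D$ has at most $r$ vertices, its image $\varphi(D)\leqslant K$ is a subcomplex with at most $r$ vertices, so by $r$-conicity there is a vertex $v\in K$ with $\varphi(D)\subseteq\st_K(v)$. I define $\widetilde\varphi$ to agree with $\varphi$ on $N$, to send $w\mapsto v$, and to extend simplicially over the cone $w\partial D$. This is simplicial: for each $\sigma\in\partial D$ we have $\varphi(\sigma)\in\varphi(\partial D)\subseteq\st_K(v)$, hence $\{v\}\cup\varphi(\sigma)\in K$, so $\widetilde\varphi(w\sigma)$ is a simplex of $K$.

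It remains to exhibit the homotopy $|\varphi|h\simeq|\widetilde\varphi|$. On $|N|$ both maps coincide with $|\varphi|$, because $h$ is the identity there and $\widetilde\varphi|_N=\varphi|_N$, so I keep the homotopy stationary on $|N|$. Over the disk, let $f_1=|\varphi|\circ h'$ and let $f_2$ be $|\widetilde\varphi|$ restricted to $|w\partial D|$. Crucially both land in the single subcomplex $|\st_K(v)|$: indeed $f_1(|w\partial D|)=|\varphi(D)|\subseteq|\st_K(v)|$ by the choice of $v$, while $f_2(|w\partial D|)=|v*\varphi(\partial D)|\subseteq|\st_K(v)|$ since the closed star is the cone $v*\lk_K(v)$. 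Moreover $f_1$ and $f_2$ agree on $|\partial D|$, where $h'=\mathrm{id}$ and $\widetilde\varphi=\varphi$. As $|\st_K(v)|$ is a cone, hence contractible, any two maps of the disk $|w\partial D|$ into it agreeing on the boundary are homotopic rel $|\partial D|$ (the obstruction lives in $\pi_d(|\st_K(v)|)=0$). Gluing this rel-boundary homotopy to the stationary homotopy on $|N|$ yields $|\varphi|h\simeq|\widetilde\varphi|$. The final clause is then formal: since $h$ is a homeomorphism and $|\varphi|h\simeq|\widetilde\varphi|$, the map $|\varphi|$ is null-homotopic if and only if $|\varphi|h$ is, if and only if $|\widetilde\varphi|$ is.

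The main obstacle is the third step: one must arrange that the two competing maps on the disk both factor through a \emph{single} contractible subcomplex, for otherwise no rel-boundary homotopy is available. The right move is to apply $r$-conicity to the entire image $\varphi(D)$, not merely to $\varphi(\partial D)$, so that $f_1$ as well as $f_2$ lands in $|\st_K(v)|$; once this is secured, contractibility of the closed star completes the argument with no further input.
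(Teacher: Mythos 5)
Your proof is correct and takes essentially the same approach as the paper: the same homeomorphism $h$ built from the identity outside the disk and an Alexander-trick homeomorphism $|w\partial D|\to|D|$ fixing $|\partial D|$, the same application of $r$-conicity to the whole image $\varphi(D)$ to obtain $v$ and the simplicial extension $\widetilde{\varphi}$, and the same rel-$|\partial D|$ homotopy inside the contractible star $|\st_K(v)|$ (your vanishing-obstruction remark is exactly the paper's extension of a map from the boundary sphere of the cylinder $|w\partial D|\times I$). The only difference is that you spell out a few routine verifications the paper leaves implicit.
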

\begin{proof}
By hypothesis there exists a subcomplex $D$ of $M$ with at most $r$ vertices which is a triangulation of a disk, say of dimension $d$, and $\widetilde{M}$ is obtained from $M$ by removing all the simplices of $D$ which are not in $\partial D$, and attaching a cone $w\partial D$. Since $D$ triangulates $D^d$, $\partial D$ triangulates $S^{d-1}$. Thus the cone $w\partial D$ triangulates $D^d$ and there is a homeomorphism $|w\partial D| \to |D|$ which is the identity on $|\partial D|$ and extends to a homeomorphism $h: |\widetilde{M}|\to |M|$.

%Then there exists a simplicial map $\widetilde{\varphi}:\widetilde{M}\to K$ and a homeomorphism $h: |\widetilde{M}| \to |M|$ such that $|\varphi|h \simeq |\widetilde{\varphi}|$. In particular $|\varphi|$ is null-homotopic if and only if $|\widetilde{\varphi}|$ is null-homotopic.  

%This can be seen in the following way. As $D$ triangulates $D^d$, $D$ is a pseudomanifold and $\partial D$ triangulates $S^{d-1}$ (see \cite{Spa} for background on pseudomanifolds). Thus the cone $w\partial D$ triangulates $D^d$ and there is a homeomorphism $|w\partial D| \to |D|$ which is the identity on $|\partial D|$ and extends to a homeomorphism $h: \widetilde{M}\to M$.

Now, since $\varphi (D)$ has at most $r$ vertices, it is contained in $\st (v)$ for some $v\in K$. Define $\widetilde{\varphi}:\widetilde{M}\to K$ by $\widetilde{\varphi}(w)=v$ and $\widetilde{\varphi}(u)=\varphi (u)$ for every $u\neq w$. Clearly $\widetilde{\varphi}$ is simplicial. In order to see that $|\varphi|h \simeq |\widetilde{\varphi}|$, it suffices to show that they are homotopic relative to $|\partial D|$ when restricted to $|w\partial D|$. But this is clear since both $|\varphi|h$ and $|\widetilde{\varphi}|$ map $|w\partial D|$ to $|\st (v)|$, which is contractible. Concretely, the map $H: |\partial D|\times I \cup |w \partial D| \times \{0,1\} \to |\st (v)|$ defined as $|\varphi|h$ in the bottom $|w\partial D|\times \{0\}$, as $|\widetilde{\varphi}|$ in the top $|w\partial D|\times \{1\}$ and as any (both) of those in the cylinder $|\partial D|\times I$, extends to all $|w \partial D|\times I$ since the domain of $H$ is homeomorphic to $S^d$ and $|\st (v)|$ is contractible.
\end{proof}

\begin{teo} \label{teo2conexo}
Every $8$-conic simplicial complex is $2$-connected.
\end{teo}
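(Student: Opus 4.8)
The plan is to prove that every continuous map $f\colon S^2\to |K|$ is null-homotopic. Together with the Proposition above (a $4$-conic, hence in particular $8$-conic, complex is simply connected), which also settles $\pi_0$ and $\pi_1$, this gives $2$-connectivity. By simplicial approximation I may replace $f$ by $|\varphi|$ for a simplicial map $\varphi\colon M\to K$ whose domain $M$ is a combinatorial triangulation of $S^2$, so it suffices to show $|\varphi|$ is null-homotopic. Since $M$ is a combinatorial $2$-manifold the Remark preceding Lemma~\ref{disco} applies, so \emph{any} subcomplex $D\le M$ triangulating a disk and having at most $8$ vertices can be $8$-starred; by Lemma~\ref{disco} each such $8$-starring replaces $(M,\varphi)$ by a pair $(\widetilde M,\widetilde\varphi)$ with $\widetilde M$ again a combinatorial $2$-sphere and with $|\varphi|$ null-homotopic if and only if $|\widetilde\varphi|$ is.

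The engine of the proof is the following combinatorial claim: every combinatorial $2$-sphere can be transformed, by a finite sequence of $8$-starrings, into a complex with at most $8$ vertices. Granting this, I run the claim on $M$; Lemma~\ref{disco} propagates the map to some $\widetilde\varphi\colon \widetilde M\to K$ with $\widetilde M$ having at most $8$ vertices. Then $\widetilde\varphi(\widetilde M)\le K$ has at most $8$ vertices, so by $8$-conicity it lies in a closed star $\st_K(v)$, which is a simplicial cone and hence contractible. Thus $|\widetilde\varphi|$ is null-homotopic, and composing the equivalences of Lemma~\ref{disco} shows $|\varphi|$ is null-homotopic, as desired.

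It remains to prove the combinatorial claim, which I would attack by induction on the number of vertices $V$. If $V\le 8$ there is nothing to do, so suppose $V\ge 9$ and look for one $8$-starring that decreases $V$. Since starring a disk with $i$ interior vertices changes the vertex count by $1-i$, I need a disk $D\le M$ with at most $8$ vertices and at least two interior vertices. The natural candidate is $D=\st_M(u)\cup\st_M(v)$ for a suitable edge $uv$: when $u$ and $v$ have no common neighbours besides the two vertices spanning triangles with $uv$, this union is a disk whose interior vertices are exactly $\{u,v\}$ and which has precisely $\deg(u)+\deg(v)-2$ vertices, and starring it is exactly the contraction of $uv$. This is an $8$-starring lowering $V$ provided $\deg(u)+\deg(v)\le 10$. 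By Euler's formula the average degree is $6-12/V<6$, so $M$ has a vertex of degree at most $5$, and I would hunt for the contractible edge among those incident to a vertex of small degree, using that a minimum-degree vertex always admits a contractible incident edge (for degree $3$ every incident edge qualifies, and for degrees $4$ and $5$ a short planarity argument on the link—no set of pairwise non-crossing diagonals of a quadrilateral or pentagon can block every incident edge—produces one).

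The main obstacle is guaranteeing that such a contractible edge can be found with degree sum at most $10$: a minimum-degree endpoint is cheap, but its partner could a priori have large degree, which is precisely the situation that the crude bound $6^{n}$ of Theorem~\ref{main} tolerates and that must be defeated to reach the sharp value $8$. I expect to control this either by a discharging/curvature argument based on $\sum_v\bigl(6-\deg(v)\bigr)=12$, forcing some low-degree vertex to have a low-degree neighbour, or, when that fails, by first performing auxiliary $8$-starrings of fans of triangles around an offending high-degree vertex to lower its degree before contracting (one also gains flexibility by allowing disks whose interior is a cluster of two or three vertices, i.e.\ contracting a short edge-path or a triangle, within the budget of eight vertices). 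Throughout, the delicate bookkeeping is to verify that each intermediate complex remains a combinatorial $2$-sphere, so that the Remark and Lemma~\ref{disco} stay applicable, and that every chosen disk genuinely has at most $8$ vertices; it is on this accounting that the precise bound $8$ ultimately rests.
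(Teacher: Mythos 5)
Your high-level skeleton coincides with the paper's: approximate $f$ simplicially, propagate the map through $8$-starrings via Lemma~\ref{disco}, and finish once the domain has at most $8$ vertices, since its image then lies in a closed star and is null-homotopic. The gap is that your ``engine'' --- the claim that \emph{every} combinatorial $2$-sphere can be carried by $8$-starrings to a complex with at most $8$ vertices --- is precisely the hard content, and it is not proved; moreover your primary move provably fails in general. Contracting an edge $uv$ is an $8$-starring only when $\deg(u)+\deg(v)\le 10$, but there are triangulations of $S^2$ with no such edge at all: stellarly subdivide every face of the icosahedron. The $20$ new vertices have degree $3$, the $12$ old vertices have degree $5+5=10$, so every edge has degree sum $13$ or $20$ (this is the sharpness example for Kotzig's theorem). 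Note also that in this example every degree-$3$ vertex has \emph{only} degree-$10$ neighbours, so the discharging idea (``some low-degree vertex has a low-degree neighbour'') cannot succeed either. Hence the whole burden falls on your fallback of auxiliary fan starrings around high-degree vertices, and that is exactly where the proof is missing: a fan of triangles has no interior vertices, so starring it \emph{increases} the vertex count by one, and your induction on the number of vertices loses its termination measure. One would need a genuine potential function combining vertex count with degree excess, plus the case analysis showing each auxiliary disk stays within $8$ vertices and the complex stays a sphere; none of this is supplied, and you acknowledge as much.

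It is worth seeing how the paper sidesteps ever proving such a general reduction theorem, because that is the cheapest repair. Instead of approximating $f$ on an arbitrary triangulation, it constructs a bespoke family $M_{n,m}$ (an $n\times m$ triangulated cylinder capped by two cones) and proves two things: (i) by a Lebesgue-number argument, for suitable $n,m$ there is a homeomorphism $h\colon|M_{n,m}|\to S^2$ making $(M_{n,m},h)$ finer than the cover $\{f^{-1}(\ost(v))\}$, so the simplicial approximation can be taken to have this domain; and (ii) $M_{n,m}$ reduces by explicit $8$-starrings --- star clusters of the ``vertical'' edges, each visibly a disk with at most $8$ vertices --- to $M_{n,m-1}$, then down to $M_{n,0}$ and finally to $M_{3,0}$, which has $5$ vertices. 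So the reduction is only ever needed for triangulations chosen in advance to make it easy. To salvage your write-up you should either prove your combinatorial claim for the specific domains your approximation step actually produces (say iterated barycentric subdivisions of the boundary of the tetrahedron), with an honest termination argument, or adopt the paper's device and choose the triangulation family yourself; as it stands, the claim your proof rests on is open in your text.
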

\begin{proof}
Let $K$ be an $8$-conic complex. We already know that $K$ is simply connected. Let $f:S^2\to |K|$ be a continuous map. We want to prove that $f$ is null-homotopic. Let $\mathcal{U}=\{f^{-1}(\ost (v)) | \ v \in K\}$. As usual $\ost (v)$ stands for the open star of $v$. Recall that a triangulation $(M,h)$ of $S^2$ (that is a simplicial complex $M$ and a homeomorphism $h:|M|\to S^2$) is said to be finer than $\mathcal{U}$ if for every vertex $w\in M$ there exists $U\in \mathcal{U}$ such that $h(\ost (w))\subseteq U$. A simplicial approximation $\varphi :M \to K$ of $fh:|M|\to |K|$ exists if and only if $(M,h)$ is finer than $\mathcal{U}$ (\cite[Theorem 3.5.6]{Spa}). Instead of dealing with a particular triangulation of $S^2$ and its barycentric subdivisions as custom, which would lead to a weaker result, we will work with a different family of triangulations of $S^2$.

Given $n\ge 3, m\ge 0$ we define the simplicial complex $M_{n,m}$ as follows. We divide a square $I\times I$ in $n\times m$ squares and subdivide each of these small squares in two triangles. We identify the two vertical sides of $I\times I$ to obtain a triangulation of $S^1\times I$. Finally we attach two cones to the cylinder, one with base $S^1\times \{0\}$ and the other with base $S^1\times \{1\}$. More precisely, the vertices of $M_{n,m}$ are $0,1$ and those of the form $(i,j)$ for $0\le i \le n-1, 0\le j\le m$. The complex is homogeneous $2$-dimensional and its $2$-simplices are the following: $\{(0,j), (1,j), (0, j+1)\}$, $\{(0,j+1), (1, j), (1,j+1)\}$ for $0\le j\le m-1$; $\{(i,j), (i+1,j), (i+1,j+1)\}$, $\{(i,j), (i+1,j+1), (i,j+1)\}$ for $1\le i \le n-1$ (identifying $i+1=n$ with $i+1=0$) and $0\le j\le m-1$; $\{0, (i,0), (i+1,0)\}$, $\{1, (i,m), (i+1,m)\}$ for $0\le i\le n-1$. See Fig. \ref{mn} (a).

\begin{figure}[h] 
\begin{center}
(a) \includegraphics[scale=0.45]{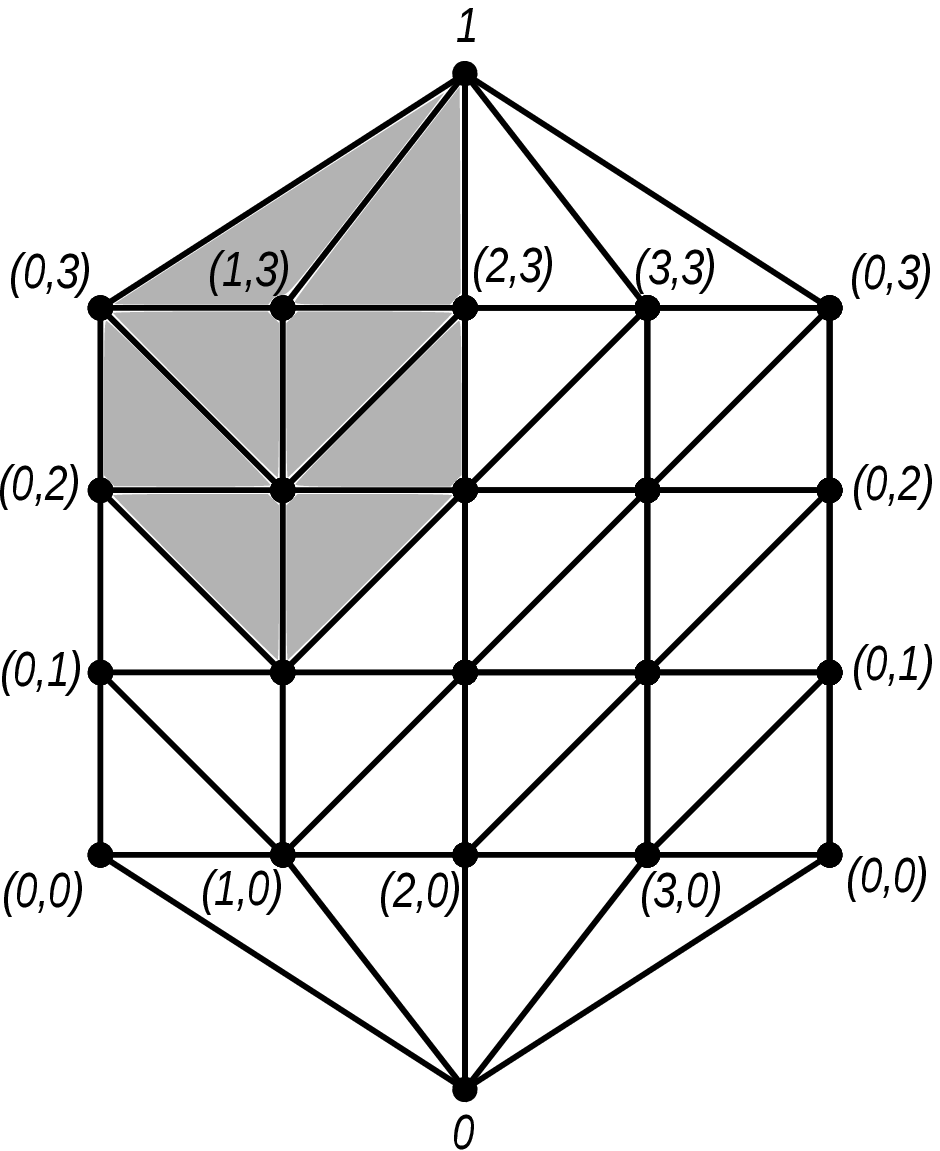} 
(b) \includegraphics[scale=0.45]{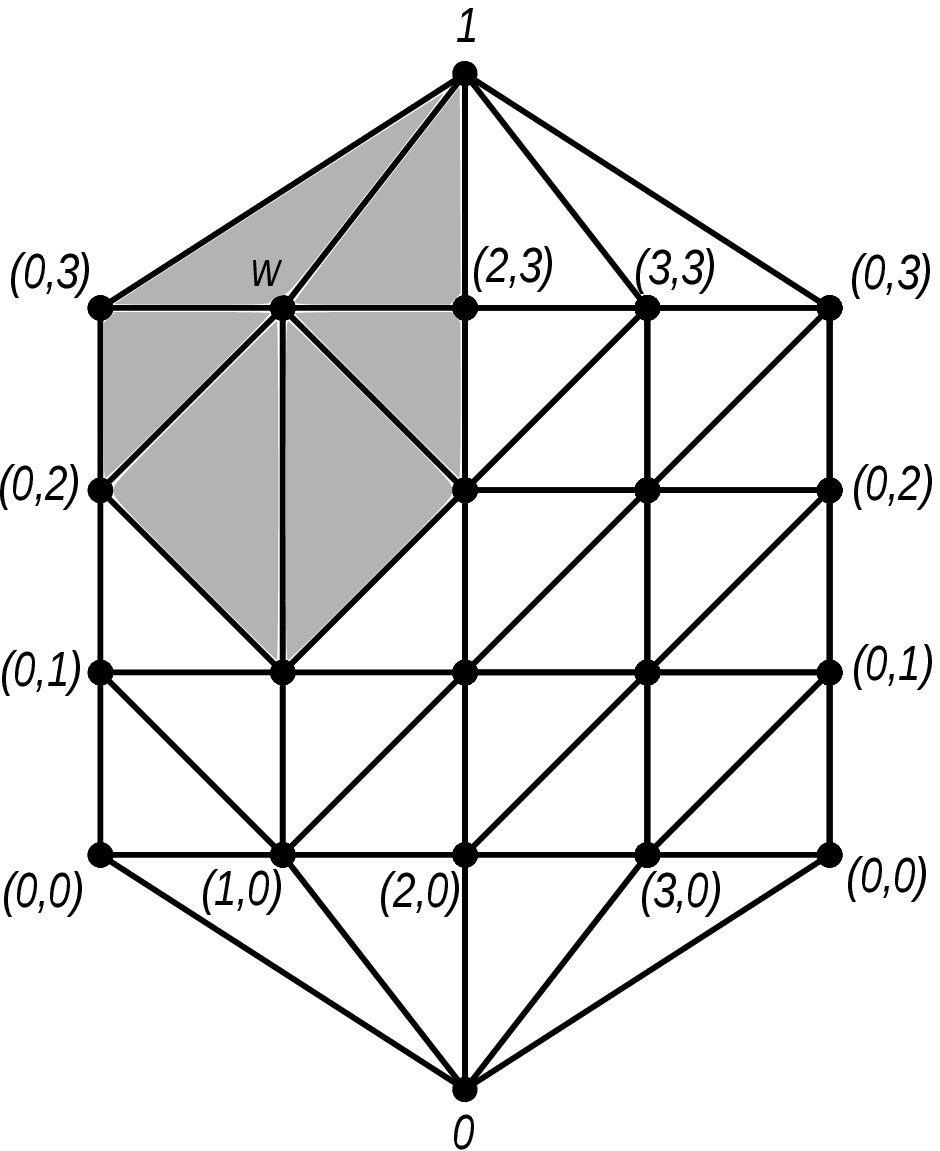} 
(c) \includegraphics[scale=0.45]{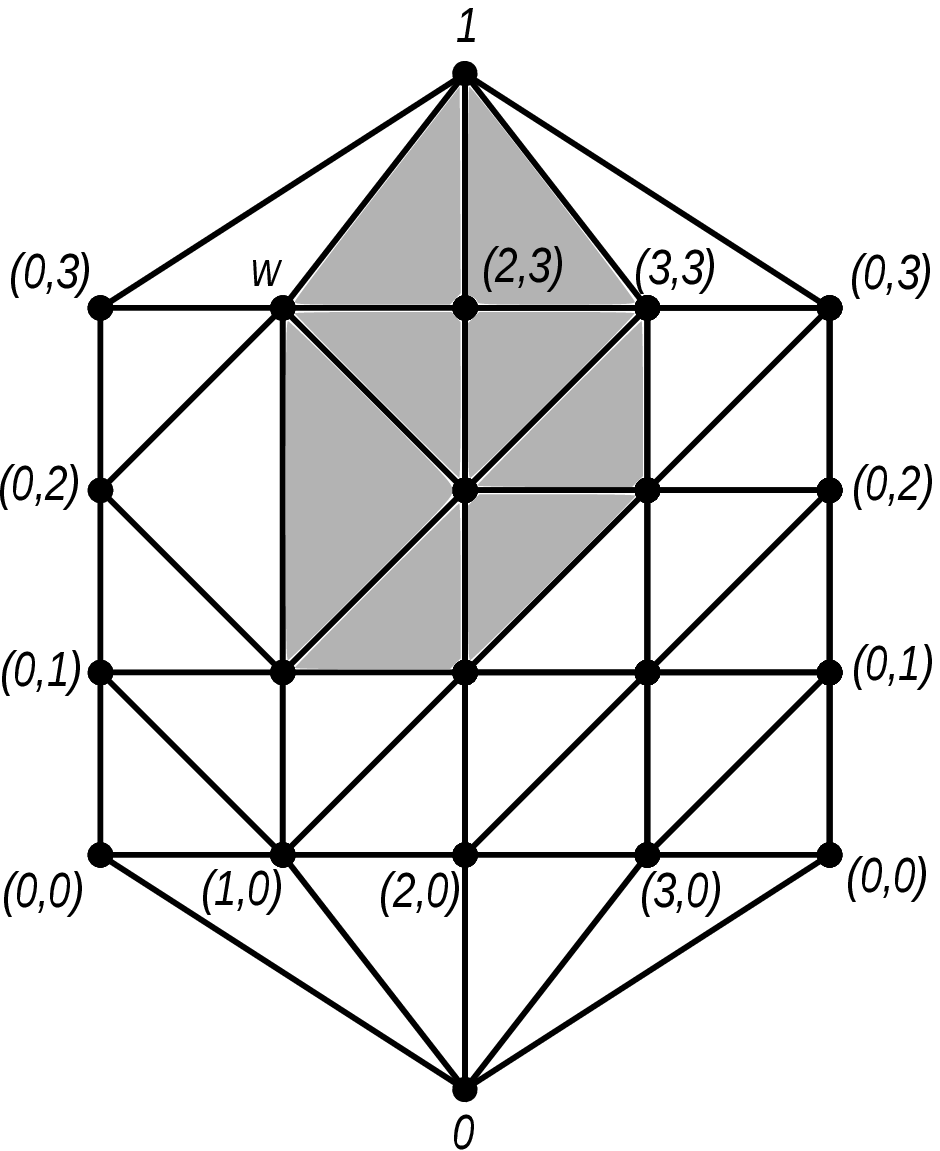}

(d) \includegraphics[scale=0.45]{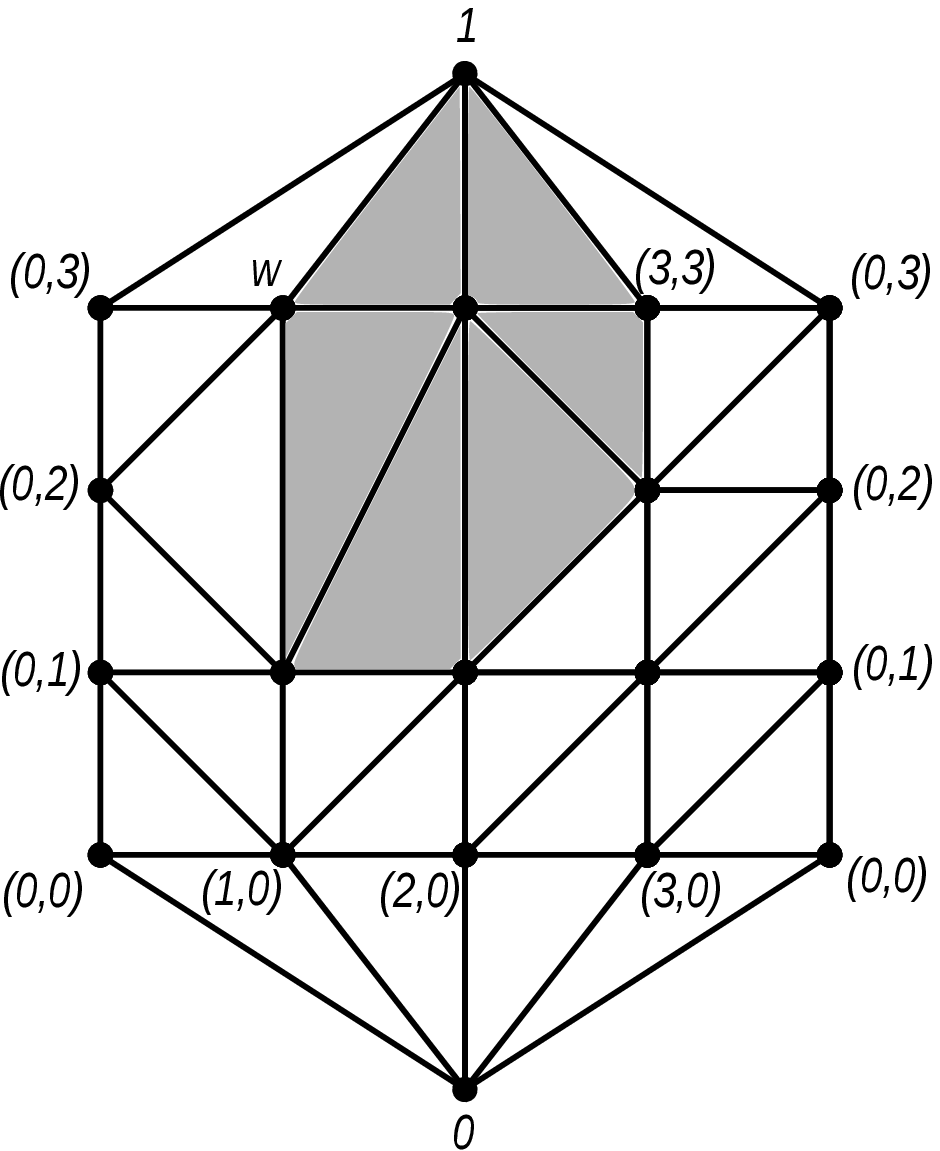}
(e) \includegraphics[scale=0.45]{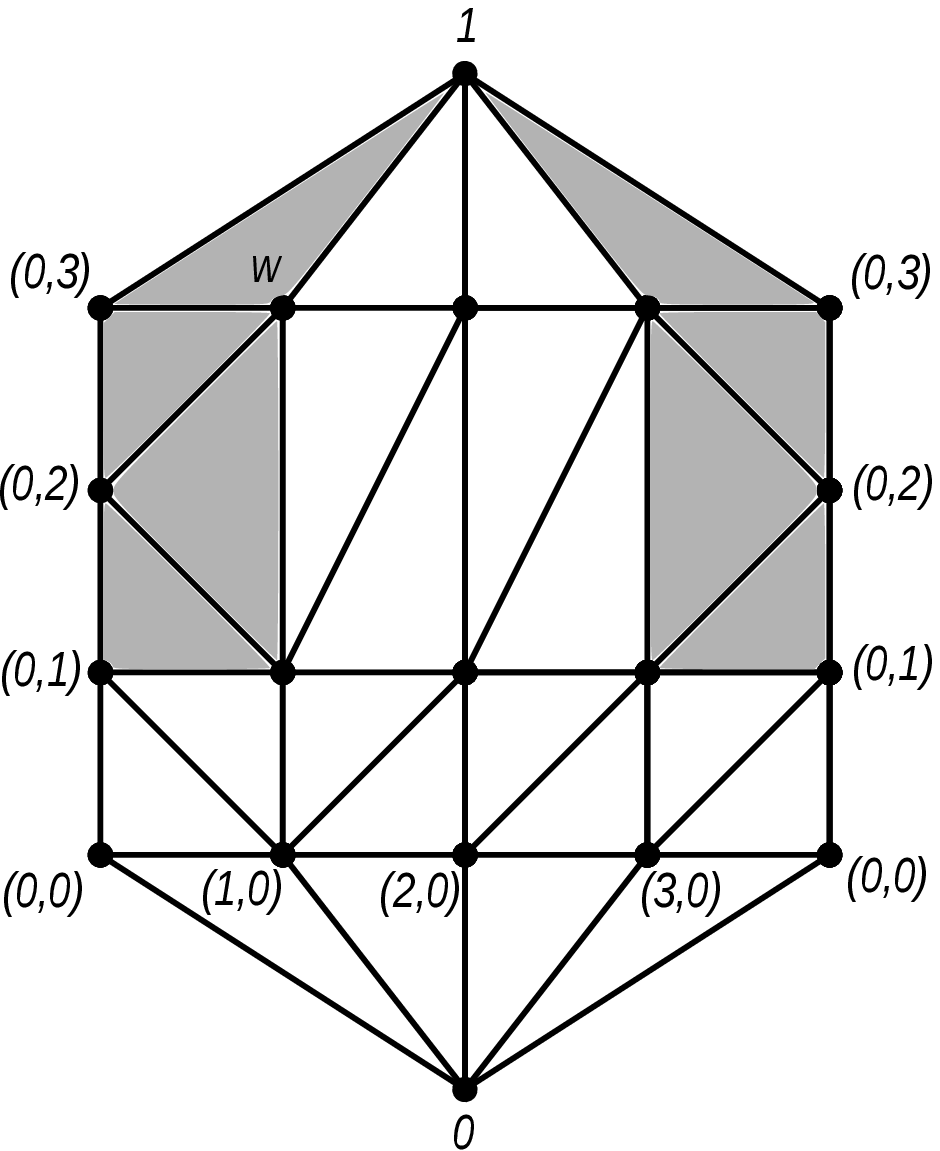}
(f) \includegraphics[scale=0.45]{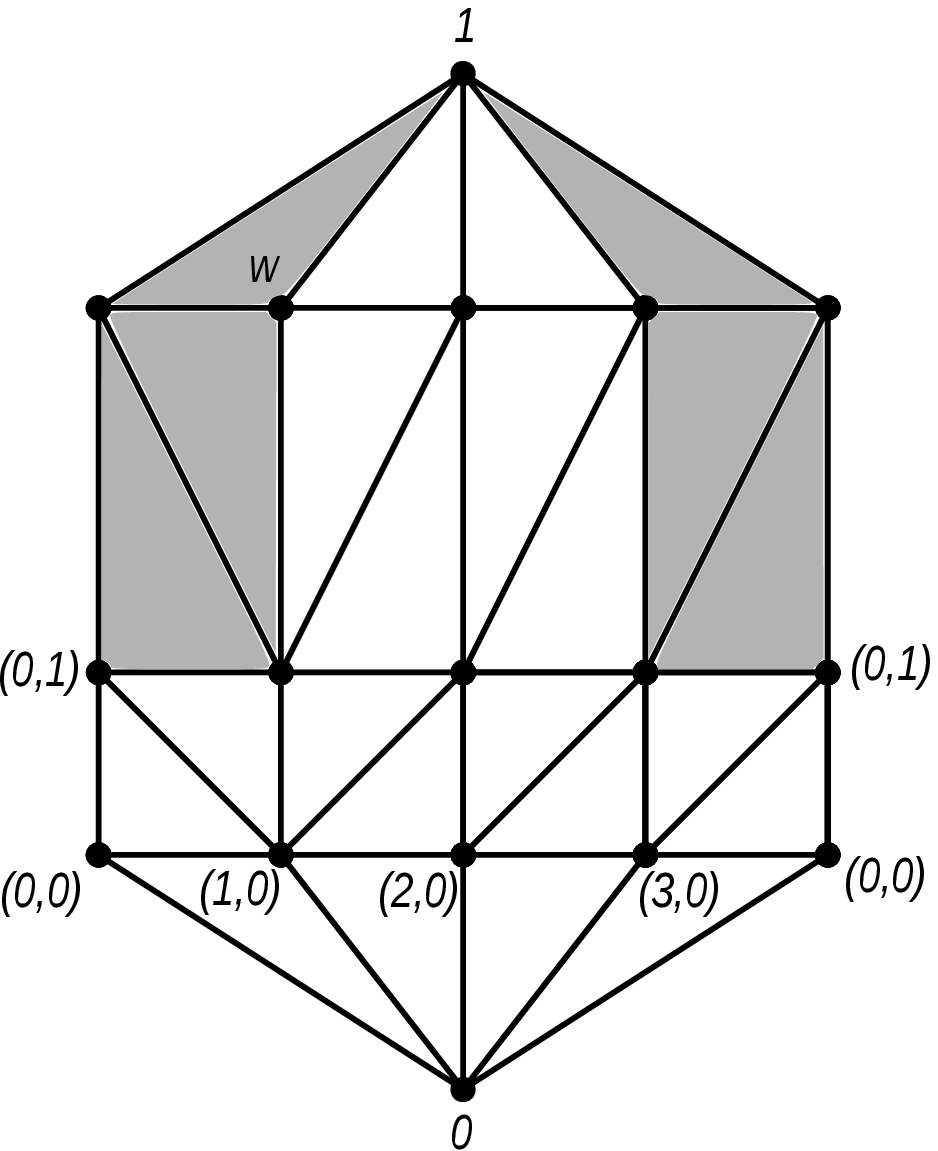}
\caption{The complex $M_{n,m}$ for $n=4,m=3$ and a sequence of $8$-starrings ending in $M_{n,m-1}$.}\label{mn}
\end{center}
\end{figure}

We claim that there exists $n\ge 3$, $m\ge 0$ and a homeomorphism $h: M_{n,m}\to S^2$ such that $(M_{n,m},h)$ is finer than $\mathcal{U}$. Indeed, $\mathcal{U}$ has a Lebesgue number $\delta$ and we can find parallels and meridians in $S^2$ in such a way that the diameter of each spherical rectangle (formed by two consecutive parallels and two consecutive meridians) and each spherical triangle (formed by two consecutive meridians and the top or the bottom parallel) is smaller than $\delta /2$ (see Fig. \ref{esf}).

\begin{figure}[h] 
\begin{center}
\includegraphics[scale=0.6]{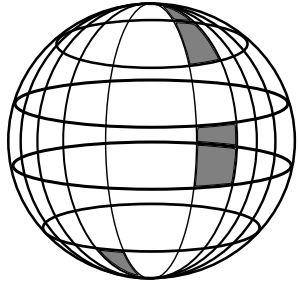}
\caption{Small rectangles and triangles in $S^2$.}\label{esf}
\end{center}
\end{figure}

Let $m+1$ be the number of parallels and $n$ the number of meridians. We can assume $n\ge 3$. There exists a homeomorphism $h:|M_{n,m}|\to S^2$ that maps each of the $nm$ squares in the definition of $M_{n,m}$ homeomorphically to one of the spherical rectangles in $S^2$, and each triangle containing vertex $0$ or $1$ to a spherical triangle. Then for every $w\in M_{n,m}$, the diameter of $h (\ost (w))$ is smaller than $\delta$ and thus it is contained in a member of $\mathcal{U}$. Let $\varphi : M_{n,m}\to K$ be a simplicial approximation to $fh:|M_{n,m}|\to |K|$. In particular $|\varphi|$ is homotopic to $fh$. In the rest of the proof we will show that $|\varphi|$ is null-homotopic, and hence so is $f$.

The \textit{star cluster} of a simplex is the union of the closed stars of its vertices. If $m\ge 2$, the star cluster $D$ of $\{(1,m), (1,m-1)\}$ is a triangulation of $D^2$ (shaded region in Fig. \ref{mn} (a)).  Note that $D$ has $8$ vertices. We perform an $8$-starring to obtain a second triangulation $\widetilde{M}_{n,m}$ of $S^2$ replacing $D$ by $w\partial D$ (see Fig. \ref{mn} (b)), and a simplicial map $\widetilde{\varphi}:\widetilde{M}_{n,m} \to K$ which is null-homotopic if and only if $\varphi$ is null-homotopic by Lemma \ref{disco}. Now the star cluster of $\{(2,m),(2,m-1)\}$ is a triangulation of $D^2$ with $8$ vertices (Fig. \ref{mn} (c)) and we can make a new $8$-starring to obtain a new triangulation of $S^2$ (Fig. \ref{mn} (d)). We continue in this way until we replace the star cluster of $\{(n-1,m),(n-1,m-1)\}$ by a cone (to obtain a complex as in Fig. \ref{mn} (e)). Then the star cluster of $\{(0,m),(0,m-1)\}$ has $8$ vertices (Fig. \ref{mn} (e)) and we perform an $8$-starring to replace it by a new cone (Fig. \ref{mn} (f)). This complex is isomorphic to $M_{n,m-1}$.

If $m=1$, then a series of $8$-starrings allows us to replace $M_{n,1}$ by $M_{n,0}$, now using triangulations of $D^2$ of $8,7$ and $6$ vertices (Fig. \ref{mnp}). 

\begin{figure}[h] 
\begin{center}
(a) \includegraphics[scale=0.45]{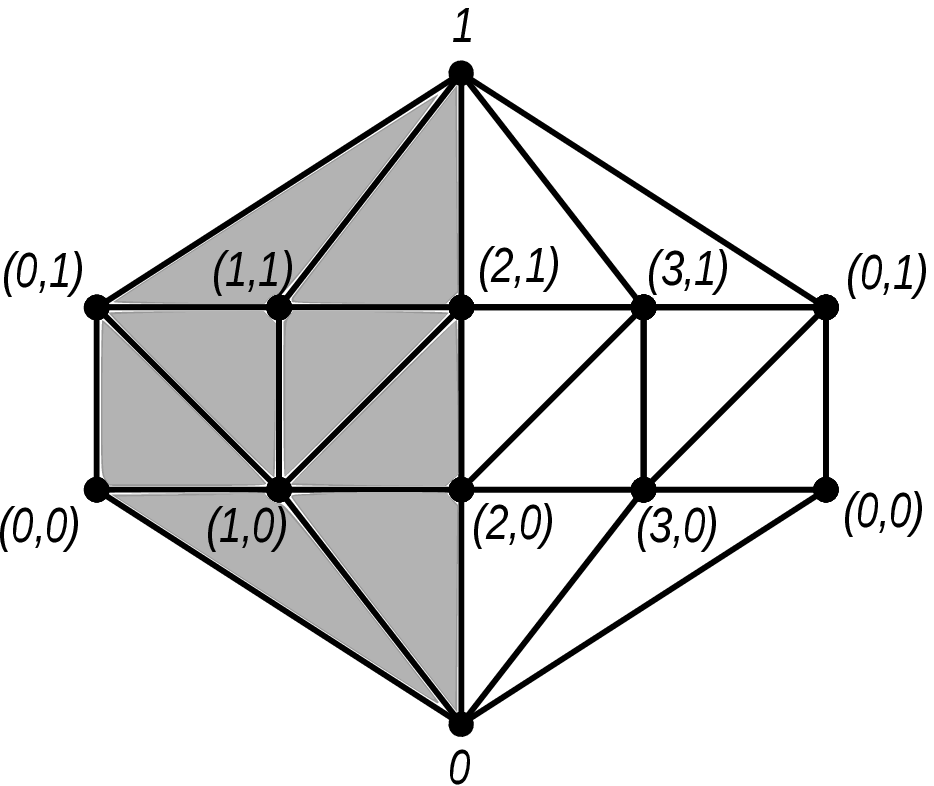}
(b) \includegraphics[scale=0.45]{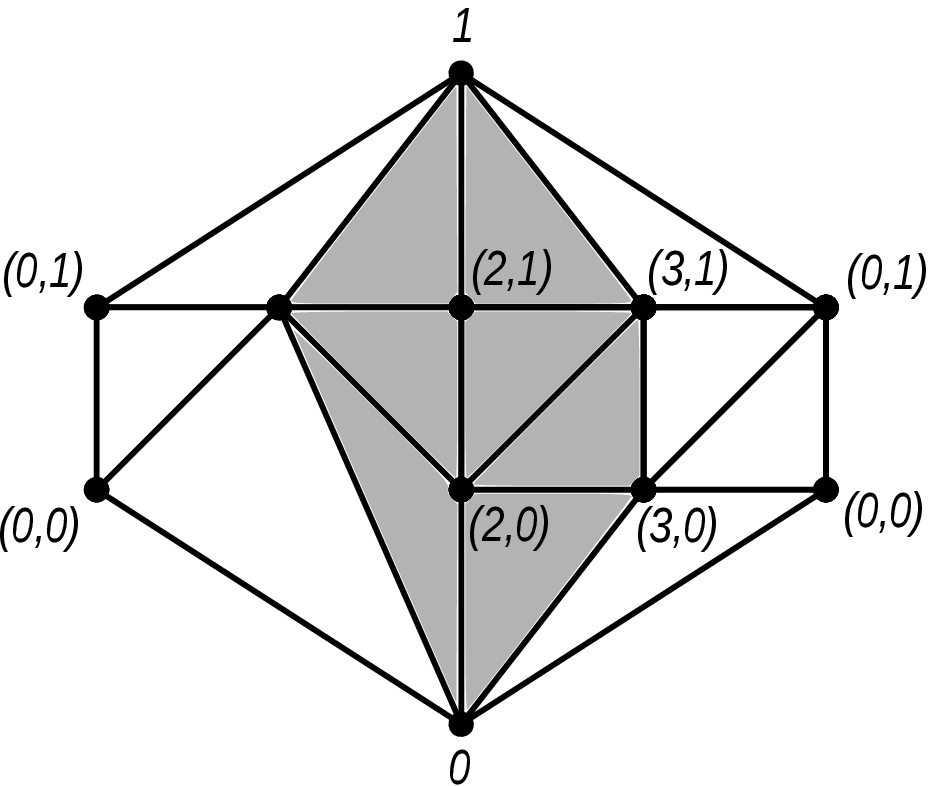}
(c) \includegraphics[scale=0.45]{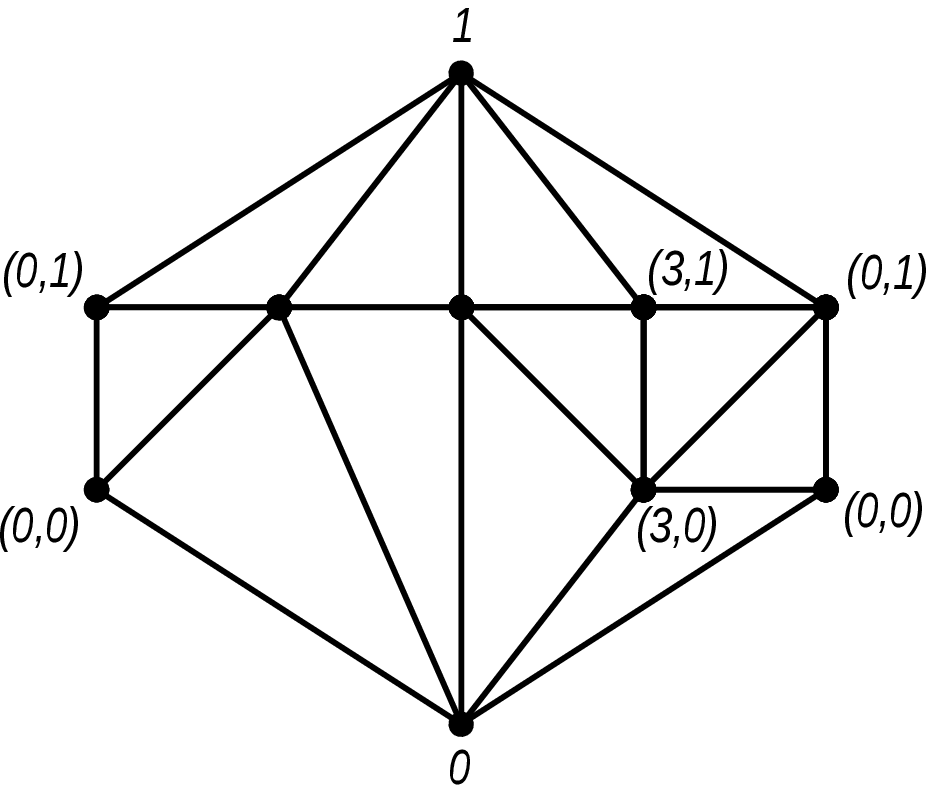}

(d) \includegraphics[scale=0.45]{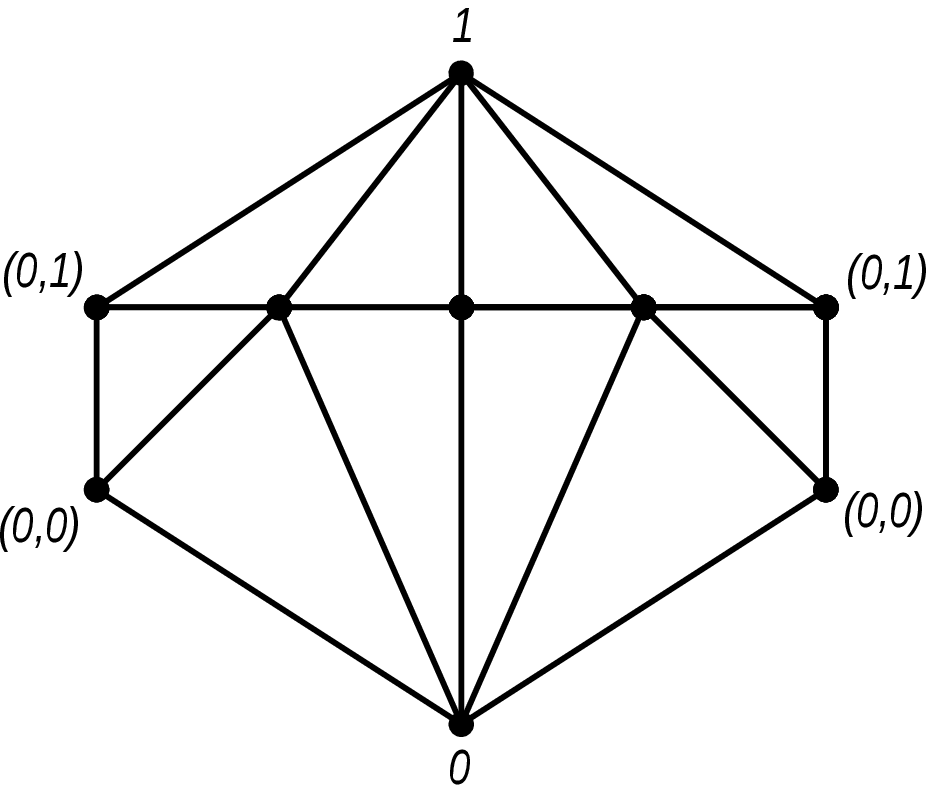}
(e) \includegraphics[scale=0.45]{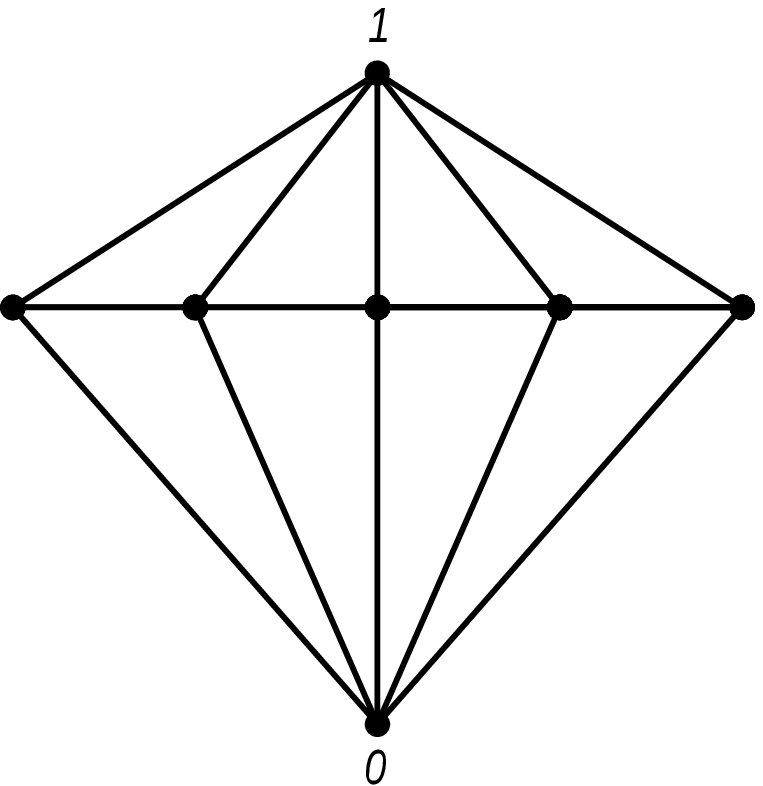}
\caption{From $M_{n,1}$ to $M_{n,0}$ with $8$-starrings.}\label{mnp}
\end{center}
\end{figure}

The complex $M_{n,0}$ is the suspension of a cycle of $n$ vertices. If $n\ge 4$, the star cluster of two adjacent vertices of the cycle is a triangulation of $D^2$ with $6$ vertices. When we replace this by the cone over the boundary we get a new triangulation of $S^2$, isomorphic to $M_{n-1,0}$ (Fig. \ref{mnpp}). Finally, note that $M_{3,0}$ has just $5$ vertices. Thus, the image of a simplicial map $M_{3,0}\to K$ is contained in a cone, and then it is null-homotopic. This proves that the original simplicial map $\varphi: M_{n,m}\to K$ is null-homotopic.

\begin{figure}[h] 
\begin{center}
(a) \includegraphics[scale=0.45]{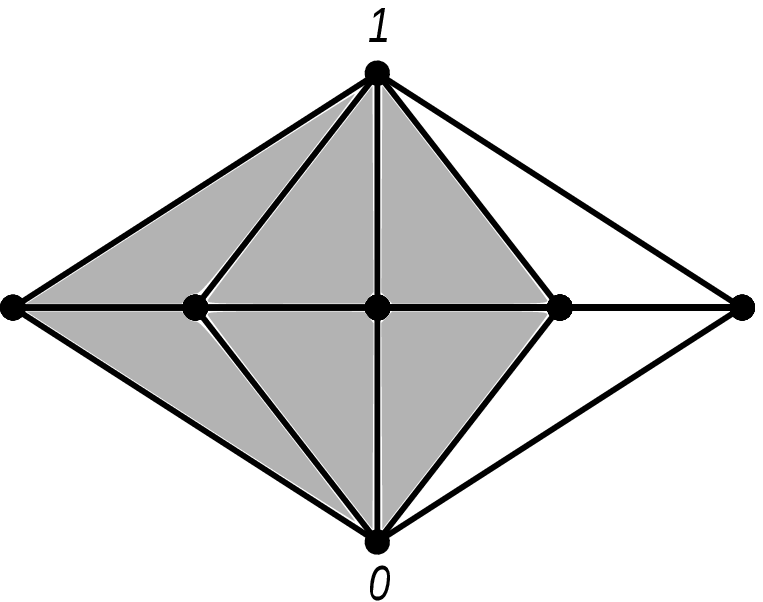}
(b) \includegraphics[scale=0.45]{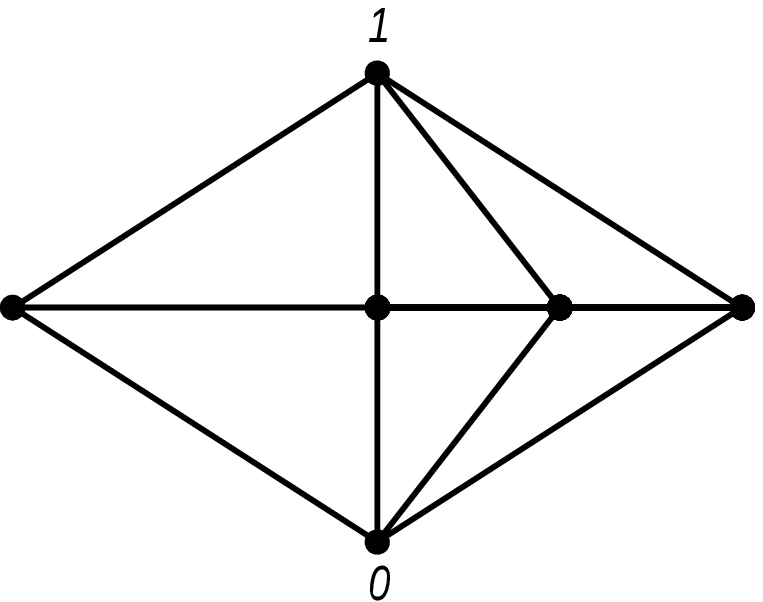}

\caption{$M_{n,0}$ $6$-stars to $M_{n-1,0}$.}\label{mnpp}
\end{center}
\end{figure}
\end{proof}

\section{Higher connectivity}

\begin{teo} \label{main}
Let $n\in \N$. If a simplicial complex is $6^{n}$-conic, then it is $n$-connected.
\end{teo}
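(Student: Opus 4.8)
The plan is to generalize the strategy of Theorem~\ref{teo2conexo} to all dimensions, proving by induction on $n$ that a $6^n$-conic complex is $n$-connected. The base cases $n=0$ (non-emptiness, trivial) and $n=1$ (simple connectivity, the Proposition) are already established, and $n=2$ is Theorem~\ref{teo2conexo} with $64=6^2$ as a safe bound. For the inductive step, let $K$ be $6^n$-conic and let $f\colon S^n\to |K|$ be continuous. As before, take a triangulation $(M,h)$ of $S^n$ finer than $\mathcal{U}=\{f^{-1}(\ost(v))\mid v\in K\}$, so that a simplicial approximation $\varphi\colon M\to K$ exists with $|\varphi|\simeq fh$. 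The goal is to show $|\varphi|$ is null-homotopic by repeatedly applying Lemma~\ref{disco}, performing $6^n$-starrings that successively simplify $M$ until it has few enough vertices that its image lands in a single star.

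\textbf{The combinatorial engine.} The crux is to build an analogue of the family $M_{n,m}$: a controllable sequence of triangulations of $S^n$ together with a systematic procedure of starrings, each at a disk $D$ that is a star cluster (union of closed stars of the vertices of a simplex) and that has at most $6^n$ vertices. The natural candidate is an iterated prism/cylinder construction: triangulate $S^n$ as a join or suspension-type assembly built from copies of lower-dimensional pieces, and reduce it layer by layer. Concretely, I would try to realize $S^n$ using a product triangulation $S^{n-1}\times I$ capped by two cones, in parallel with the $n=2$ case where $M_{n,m}$ was a triangulated cylinder $S^1\times I$ with two cone caps. The key quantitative claim is that the star cluster of an appropriate edge (or small simplex) in such a triangulation has a \emph{bounded} number of vertices, and that this bound is at most $6^n$; the factor $6$ per dimension should emerge from counting the vertices in the star cluster of an edge in a product-cell triangulation, roughly a constant-size neighborhood growing multiplicatively with dimension.

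\textbf{The induction hook.} The role played in Theorem~\ref{teo2conexo} by the final small complex $M_{3,0}$ being a cone must be replaced by an appeal to the inductive hypothesis. After enough starrings strip away the ``$I$-direction,'' one is left with a triangulation built from an equatorial $S^{n-1}$, i.e.\ essentially a suspension $\Sigma M'$ of a triangulation $M'$ of $S^{n-1}$. At this stage the composite map on the equatorial sphere should, after the reductions, factor through a $6^{n-1}$-conic situation: since $K$ is $6^n$-conic it is in particular $6^{n-1}$-conic, so by induction it is $(n-1)$-connected, which lets us contract the equatorial part and collapse the suspension to a point. Verifying that the starring reductions genuinely deposit us at a map whose nontriviality is controlled by $\pi_{n-1}(K)=0$ is the delicate bookkeeping step.

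\textbf{The main obstacle.} The hard part will be the explicit combinatorics: defining the higher-dimensional triangulations precisely, checking at each stage that the chosen $D$ really is a triangulated $d$-disk satisfying the coface condition (automatic once we know $M$ is a combinatorial manifold, by the Remark following the definition of starring, so I would maintain that invariant throughout), and—most importantly—proving the vertex bound $|V_D|\le 6^n$ uniformly across all the starrings. Getting a clean inductive description of the triangulations so that each reduction step lowers a well-defined complexity parameter, while keeping every star cluster within the $6^n$ budget, is where the real work lies; the topological input (Lemma~\ref{disco} plus the inductive hypothesis) is comparatively routine once the combinatorial scaffolding is in place.
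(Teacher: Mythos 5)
Your combinatorial scaffolding (triangulate $S^n$, take a simplicial approximation, repeatedly apply Lemma \ref{disco} via $6^n$-starrings at star clusters, and maintain the combinatorial-manifold invariant so that the coface condition is automatic) is the right framework, and it is close to what the paper does, though the paper's triangulations are doubled barycentric subdivisions $L_k$ of cubical subdivisions of $I^n$ rather than prisms $S^{n-1}\times I$; there the budget $6^n$ emerges from the fact that an $n$-cube has $3^n$ faces. The fatal problem is your ``induction hook.'' You propose to finish by observing that the reductions leave a map from a suspension $\Sigma M'$ of a triangulation $M'$ of $S^{n-1}$, and that $(n-1)$-connectivity of $K$ (the inductive hypothesis) ``lets us contract the equatorial part and collapse the suspension to a point.'' This cannot work: \emph{every} map $S^n\to X$ restricts null-homotopically to the equator $S^{n-1}$, since the restriction extends over a hemisphere, yet the map itself need not be null-homotopic --- take $X=S^n$, which is $(n-1)$-connected, and the identity map. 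Null-homotoping the equator only factors the class in $\pi_n(K)$ through the pinch map $S^n\to S^n\vee S^n$; it does not kill it. In short, $\pi_{n-1}(K)=0$ gives no control over $\pi_n(K)$, so the ``delicate bookkeeping step'' you flag is not bookkeeping at all: it is an unfixable hole at the final and most important step. (A minor slip besides: $6^2=36$, not $64$; the base cases are fine anyway by monotonicity of conicity.)

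The correct endgame --- which is also how your own model, Theorem \ref{teo2conexo}, actually ends --- appeals not to lower connectivity but to conicity itself: reduce by starrings to a \emph{fixed} triangulation of $S^n$ with at most $6^n$ vertices; then the image of the simplicial map is contained in a single closed star $\st(v)$, which is a cone, so the map is null-homotopic. The paper implements this without any induction on $n$: it shows $L_{k+1}$ $6^n$-stars to $L_k$ by starring at the subdivided cubes of $L_k$ in decreasing order of dimension (with vertex counts bounded by $6^{n-q}+2\cdot 3^n<6^n$), so any simplicial approximation on $L_k$ reduces to a map on $L_0$, which has $3^n+1\le 6^n$ vertices and hence lands in a cone. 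The only role of lower dimensions is the trivial remark that $6^n$-conic implies $6^m$-conic for $m\le n$, which handles $\pi_m(K)$ for $m<n$. If you replace your suspension/$\pi_{n-1}$ step by this ``small complex lands in a cone'' step, your outline becomes essentially the paper's proof; as written, it does not prove the theorem.
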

\begin{proof}
Let $K$ be $6^n$-conic. Since $(6^{n})_{n\in \N}$ is increasing, we only have to prove that every map $S^n\to |K|$ is null-homotopic. As in the proof of Theorem \ref{teo2conexo} we will work with a family of triangulations of $S^n$. 

%For $n\ge 3$ barycentric subdivisions of a fixed triangulation of $S^2$ will not work due to the fact that the degree
\medskip

\textbf{Triangulations of $S^n$, starrings and the basis of the induction.}

\medskip

Given $k\ge 0$ we consider a cubical complex $C_k$ homeomorphic to $D^n$. It is obtained by subdividing $I^n$ in $2^{nk}$ \textit{basic} cubes $Q_{i_1,i_2, \ldots, i_n}=[\frac{i_1-1}{2^k}, \frac{i_1}{2^k}]\times [\frac{i_2-1}{2^k}, \frac{i_2}{2^k}]\times \ldots \times [\frac{i_n-1}{2^k}, \frac{i_n}{2^k}]$ ($1\le i_j\le 2^k$ for each $1\le j\le n$) of dimension $n$ and side $\frac{1}{2^k}$. Let $C_k'$ be the barycentric subdivision of $C_k$. That is, a simplex of $C_k'$ is a set $\{b(Q_0), b(Q_1), \ldots , b(Q_l)\}$, where the $Q_i$ are cubes of $C_k$ (of different dimension), $Q_i$ is a face of $Q_{i+1}$ for each $0\le i<l$, and $b(Q)$ denotes the barycenter of the cube $Q$. Then $C_k'$ is a simplicial complex and we define $L_k$ to be a union of two copies of $C_k'$ identified by their boundary. Clearly $L_k$ is a triangulation of $S^n$. We will prove that for $k\ge 0$, $L_{k+1}$ $6^n$-stars to $L_k$. 

Let $Q^n=Q_{i_1,i_2, \ldots, i_n}$ be one basic cube of $C_{k}$. This is subdivided into $2^n$ basic cubes of $C_{k+1}$. Consider the subcomplex $C_{k+1}'[Q^n]$ of $C_{k+1}'$ of simplices contained in $Q^n$. This is a triangulation of $Q^n$, which is homeomorphic to $D^n$ (see Figure \ref{fk2}). The number of vertices of $C_{k+1}'[Q^n]$ is less than $2^n$ times the number of vertices $t_n$ in $C_{k+1}'$ of one basic cube of $C_{k+1}$. But the number $t_n$ is the number of faces of an $n$-dimensional cube, which is easily seen to be $3^n$: a face of $I_1\times I_2 \times \ldots \times I_n$ corresponds to choosing for each interval either the whole interval, its minimum or its maximum. Thus, the number of vertices of $C_{k+1}'[Q^n]$ is smaller than $6^{n}$, and we can perform a $6^{n}$-starring. The new vertex can be identified with the barycenter $b(Q^n)$ and its link is $\partial C_{k+1}'[Q^n]$. We do this for each basic cube of the two copies of $C_k$ in $L_k$ to obtain a new complex $L_{k+1}^{(1)}$.

\begin{figure}[h] 
\begin{center}
\includegraphics[scale=0.5]{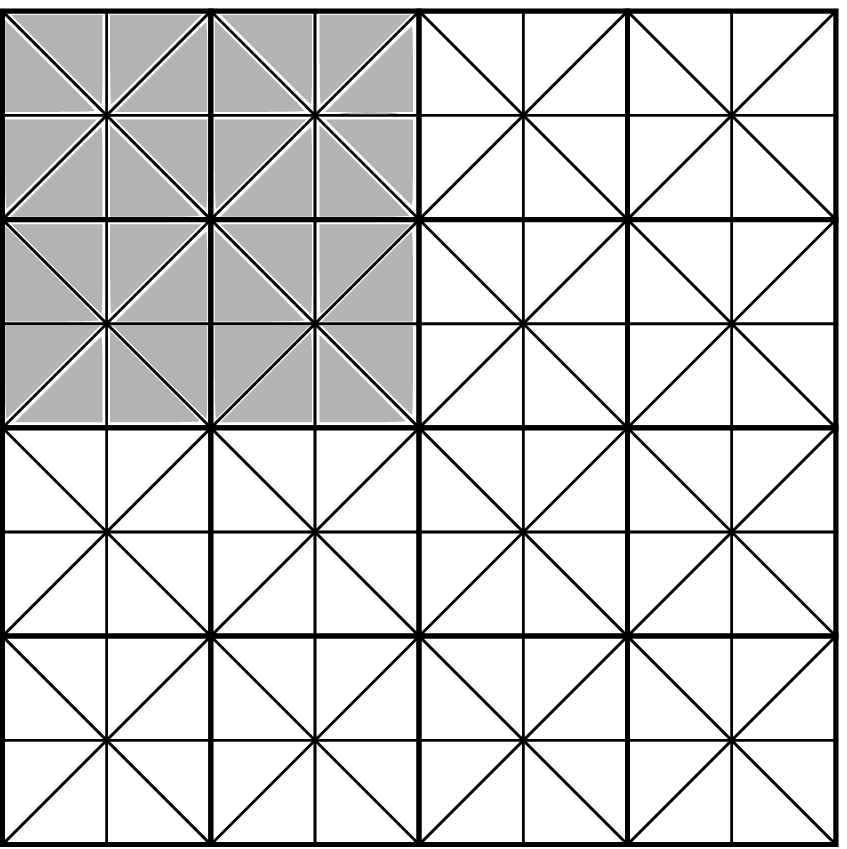}
\includegraphics[scale=0.5]{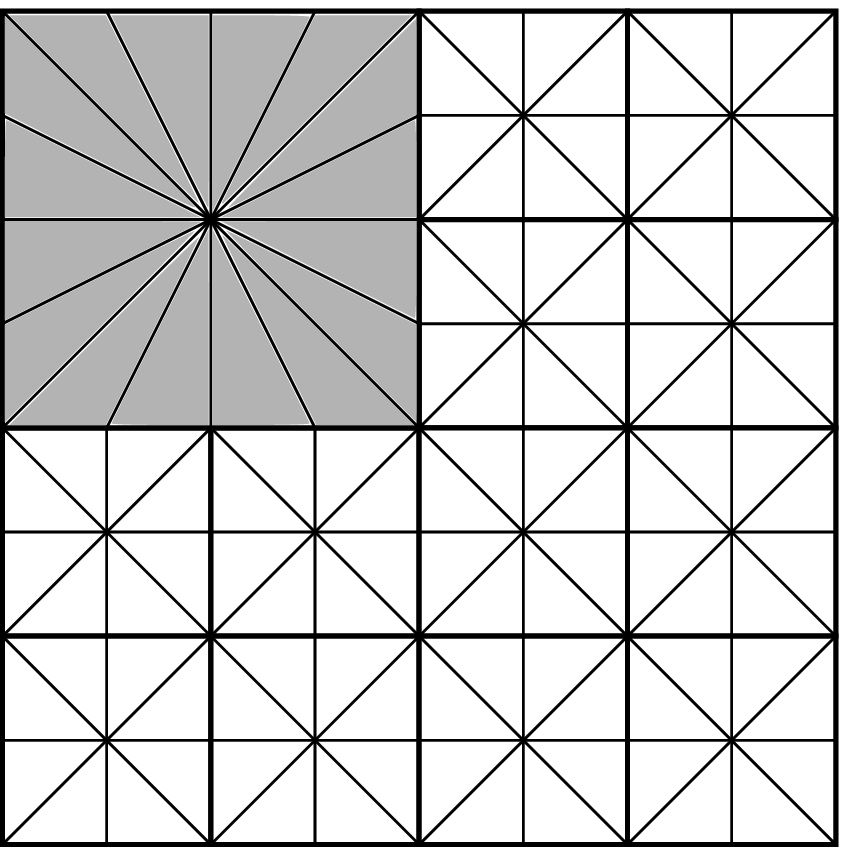}
\includegraphics[scale=0.5]{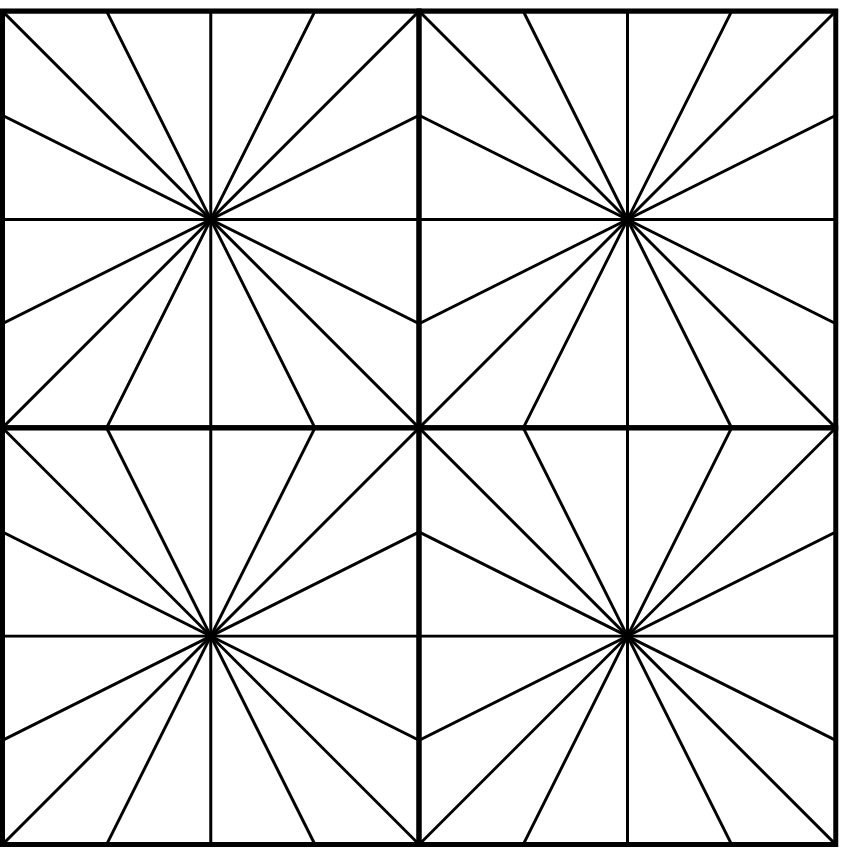}

\caption{The complex $L_2$ for $n=2$. Only one copy of $C_2'$ is visible in this picture. A basic cube $Q^2$ of $L_1$ is shaded. In the middle the first $6^2$-starring has been performed. At the right the complex $L_2^{(1)}$.}\label{fk2}
\end{center}
\end{figure}

If $Q^{n-1}$ is now an $(n-1)$-dimensional cube of $L_k$ (a codimension $1$ face of a basic cube of one of the two copies of $C_k$), then the subcomplex $L_{k+1}[Q^{n-1}]$ of $L_{k+1}$ of simplices in $Q^{n-1}$ is contained in the boundaries $\partial L_{k+1}[Q_1^n]$ and $\partial L_{k+1}[Q_2^n]$ of two basic cubes $Q_1^n$, $Q_2^n$ of $L_k$ (see Figure \ref{fk2q}). Thus $b(Q_1^n)L_{k+1}[Q^{n-1}]$ and $b(Q_2^n)L_{k+1}[Q^{n-1}]$ are two cones of $L_{k+1}^{(1)}$ whose union $\Sigma L_{k+1}[Q^{n-1}]$ is a triangulation of $D^n$. Moreover, the number of vertices of this suspension is less that $6^{n-1}+2\le 6^{n}$. Then $\Sigma L_{k+1}[Q^{n-1}]$ can be $6^{n}$-starred in $L_{k+1}^{(1)}$. The new vertex may be identified with $b(Q^{n-1})$. We do this for every $(n-1)$-cube of $L_k$ to obtain a new complex $L_{k+1}^{(2)}$. Note that the link of $b(Q^{n-1})$ is $b(Q_1^n)\partial L_{k+1}[Q^{n-1}] \cup b(Q_2^n)\partial L_{k+1}[Q^{n-1}] $. 

\begin{figure}[h] 
\begin{center}
\includegraphics[scale=0.5]{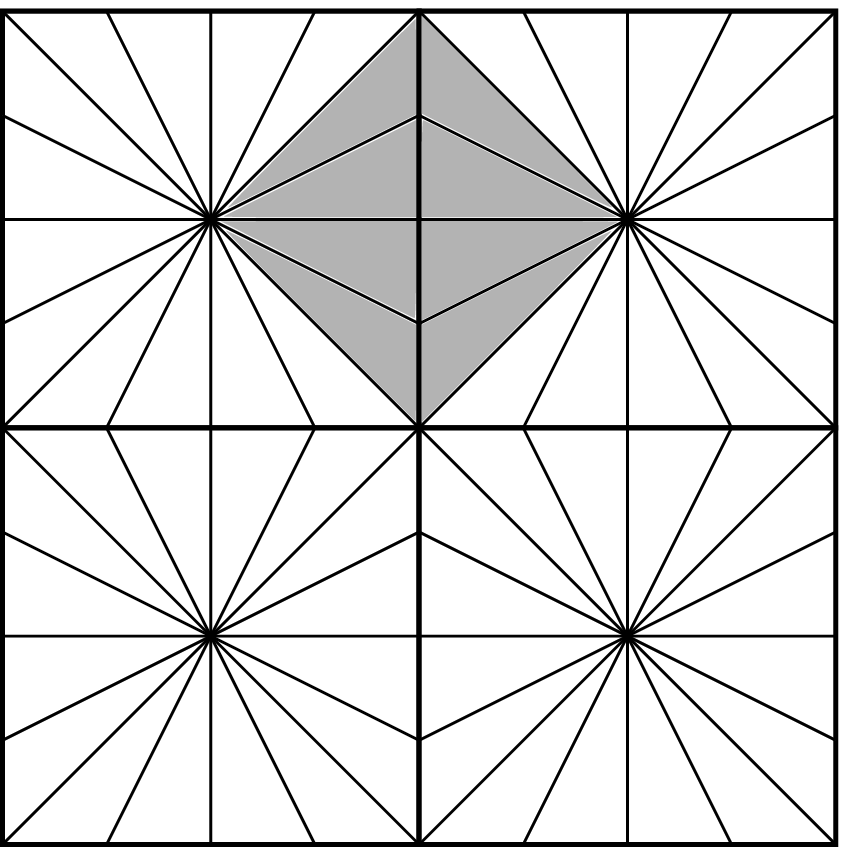}
\includegraphics[scale=0.5]{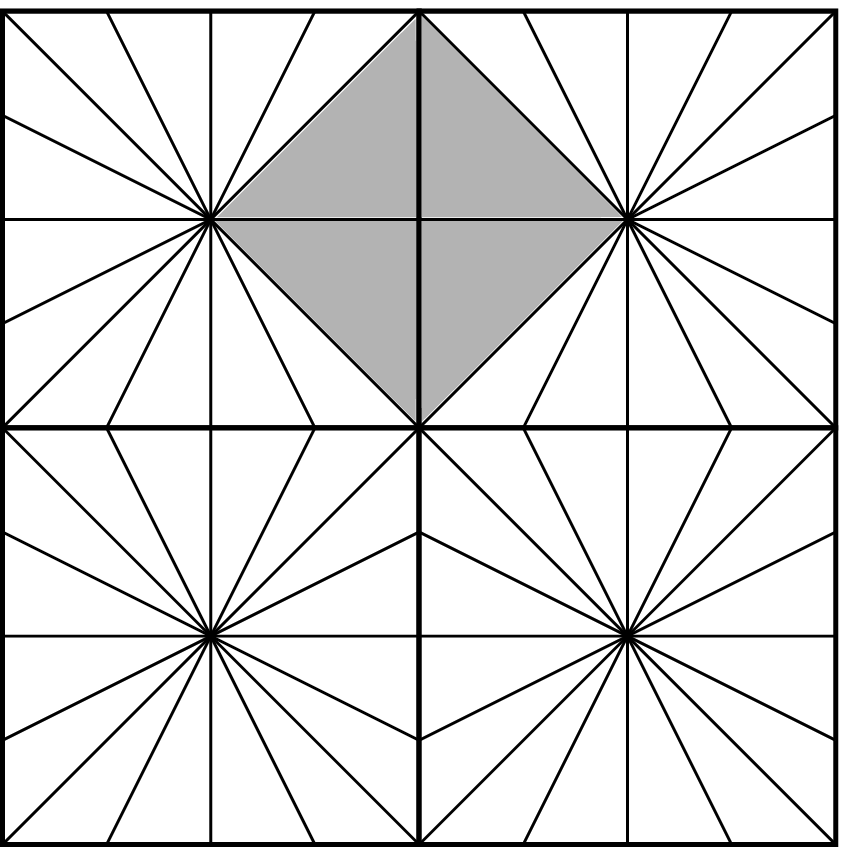}
\includegraphics[scale=0.5]{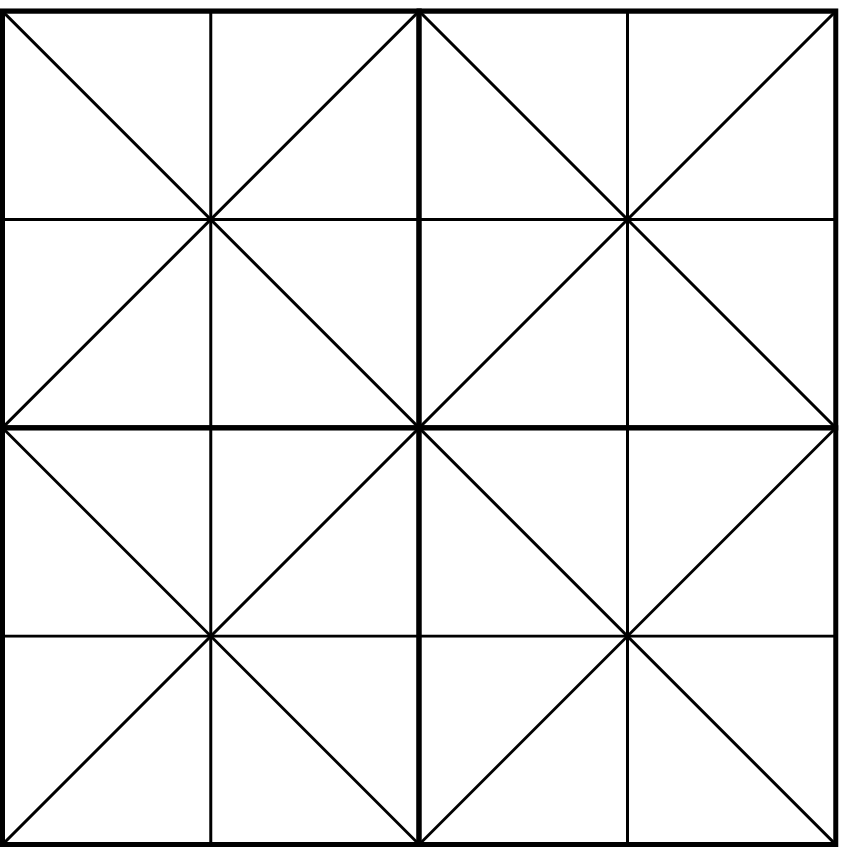}

\caption{The complex $L_2^{(1)}$ and the suspension of a cube $Q^1$ of $L_1$. In the middle, the result of a $6^2$-starring. At the right the complex $L_2^{(2)}$, isomorphic to $L_1$.}\label{fk2q}
\end{center}
\end{figure}

\medskip

\textbf{The induction step, combinatorial manifolds and shellability.}

\medskip

Suppose we have already $6^{n}$-starred all the $(n-p)$-cubes of $L_k$ in $L_{k+1}$ for $0\le p< q$ where $q\in \N$ is smaller than or equal to $n-1$, and we have obtained the complex $L_{k+1}^{(q)}$. Moreover, suppose that the maximal simplices of $L_{k+1}^{(q)}$ are of the form $\sigma b(Q^{n-q+1})b(Q^{n-q+2})\ldots b(Q^{n})$ where $\sigma$ is an $(n-q)$-simplex of $L_{k+1}$ inside an $(n-q)$-cube $Q^{n-q}$ of $L_k$, and $Q^i$ is an $i$-cube of $L_k$ for $n-q+1\le i\le n$, where $Q^{i}$ is a face of $Q^{i+1}$ for each $n-q\le i<n$. Let $Q^{n-q}$ be an $(n-q)$-cube of $L_k$. Then $L_{k+1}[Q^{n-q}]$ is a triangulation of $D^{n-q}$. The subcomplex $S$ of $L_{k+1}^{(q)}$ generated by the simplices $b(Q^{n-q+1})b(Q^{n-q+2})\ldots b(Q^{n})$ such that $Q^i\subsetneq Q^{i+1}$ for every $n-q\le i<n$, is contained in the star of each simplex of $L_{k+1}[Q^{n-q}]$. That is, the join $S*L_{k+1}[Q^{n-q}]$ is a subcomplex of $L_{k+1}^{(q)}$. On the other hand $|S*L_{k+1}[Q^{n-q}]|$ is homeomorphic to $|S*L_{k}[Q^{n-q}]|$, but $S*L_{k}[Q^{n-q}]$ is (isomorphic to) the star of the vertex $b(Q^{n-q})$ in $L_k$. Hence, in order to see that $S*L_{k+1}[Q^{n-q}]$ triangulates $D^n$, it suffices to show that $L_k$ is a combinatorial manifold of dimension $n$. The fact that $L_k$ is a PL manifold can be proved as follows. We prove more generally that if $m_1, m_2, \ldots, m_n\in \N$ and $C_{m_1,m_2,\ldots, m_n}$ is the cubic complex obtained by subdividing $I^n$ in $m_1\times m_2\times \ldots \times m_n$ $n$-cubes, then its barycentric subdivision is a combinatorial $n$-ball. We do this by induction in $n$ and all the $m_i$. For $n=1$ it is trivial. If we assume it is valid for $n-1$, then for $n$ is also valid: if all the $m_i$ are $1$, we have just one $n$-cube in $C=C_{m_1,m_2,\ldots, m_n}$. So we must show that the barycentric subdivision of the $n$-cube is a combinatorial ball. Since $\partial C$ is the boundary of a polytope, its barycentric subdivision $(\partial C)'$ is isomorphic to the boundary of a polytope (see for instance Ewald-Shephard's paper \cite[Section 2]{ES}). Now we invoke Bruggesser-Mani's Theorem \cite[Corollary]{BM} that boundaries of polytopes are shellable. Finally, a shellable $(n-1)$-dimensional pseudomanifold is a combinatorial sphere (see Bing \cite{Bin} or Danaraj-Klee \cite{DK}). Thus, the cone $C'$ over $(\partial C)'$ is a combinatorial ball. 

If now we have some $m_i\ge 2$, then $C=C_{m_1,m_2, \ldots,  m_n}$ is a union of two cubical complexes $C^1$ and $C^2$ isomorphic to $C_{m_1,m_2,\ldots, m_{i-1}, m_i-1, m_{i+1}, \ldots,  m_n}$ and $C_{m_1,m_2,\ldots, m_{i-1}, 1, m_{i+1}, \ldots,  m_n}$ and their intersection is isomorphic to $C_{m_1,m_2,\ldots, m_{i-1}, m_{i+1}, \ldots,  m_n}$. By induction $(C^1)'$ and $(C^2)'$ are combinatorial $n$-balls and $(C^1)'\cap (C^2)'$ is a combinatorial $(n-1)$-ball. By \cite[Theorem 14:3]{Ale} $C'$ is a combinatorial $n$-ball.

Finally, if we take two copies of $C=C_{m_1,m_2, \ldots,  m_n}$ and identify their boundaries, then we obtain a cubical complex whose barycentric subdivision is a combinatorial $n$-sphere (\cite[Theorem 14:1]{Ale}). Our complex $L_k$ is a particular case of these $C$ in which all the $m_i$ are equal to $2^k$.

Now that we have proved $L_k$ is a combinatorial $n$-manifold, we count the number of vertices in $S*L_{k+1}[Q^{n-q}]$. The complex $L_{k+1}[Q^{n-q}]$ has less than $6^{n-q}$ vertices. The vertices of $S$ are in correspondence with cubes of the two copies of $C_k$ containing $Q^{n-q}$ properly. To obtain an upper bound we can assume $Q^{n-q}$ is just a vertex of one copy $C_k$ (or both). If the vertex is not in the boundary of $C_k$, then it is contained in exactly $3^n-1$ cubes. If it is in the boundary, then it is certainly contained in less than $2(3^n-1)$ cubes. Thus the number of vertices in $S*L_{k+1}[Q^{n-q}]$ is smaller than $6^{n-q}+2.3^n$, which is less than $6^{n}$ since $n>q\ge 1$. Therefore, we can $6^{n}$-star $S*L_{k+1}[Q^{n-q}]$ and introduce a new vertex $b(Q^{n-q})$. We perform one such starring for every $(n-q)$-cube of $L_k$ to transform $L_{k+1}^{(q)}$ into $L_{k+1}^{(q+1)}$. Note that if $Q^{n-q}$ is an $(n-q)$-cube of $L_k$, the link of $b(Q^{n-q})$ in $L_{k+1}^{(q+1)}$ is $S*\partial L_{k+1}[Q^{n-q}]$, so the maximal simplices of $L_{k+1}^{(q+1)}$ containing $b(Q^{n-q})$ are of the form $\sigma b(Q^{n-q})b(Q^{n-q+1})\ldots b(Q^{n})$ where $\sigma$ is an $(n-q-1)$-simplex of $L_{k+1}$ inside an $(n-q-1)$-cube $Q^{n-q-1}$ of $L_k$, and $Q^i$ is an $i$-cube of $L_k$ for $n-q+1\le i\le n$, where $Q^{i}$ is a face of $Q^{i+1}$ for each $n-q-1\le i<n$. This concludes the induction step.

We have proved that $L_{k+1}$ can be $6^{n}$-starred to $L_{k+1}^{(n)}$, which is isomorphic to $L_k$.

\medskip

\textbf{A linear metric and small subdivisions.}

\medskip

Let $f: S^n \to |K|$ be a continuous map. It is not hard to show that there exists $k\in \N$ and a simplicial approximation $\varphi : L_k \to K$ of $f$. We give a concrete proof using the classic notion of linear metric (see \cite{Spa}). We consider two copies of $I^n$, $I_1$ and $I_2$, as metric spaces with the usual metric, which will be called $d_1$ and $d_2$. Let $J$ be the set which is the union of $I_1$ and $I_2$ in which the two copies of $\partial I^n$ have been identified. It is easy to define a metric $d$ in $J$ which restricts to $d_i$ in $I_i$ for $i=1,2$. One can define for $x\in I_1$, $y\in I_2$, $d(x,y)=min \{d_1(x,a)+d_2(a,y) | \ a \in \partial I^n\}$. This is well defined and it is a metric (easy exercise, see also \cite[Lemma 5.24]{Bri}). Also note that $J$ is homeomorphic to $S^n$: the topological space $I_1\bigcup\limits_{\partial I^n} I_2$ with the quotient topology $\tau$ is homeomorphic to $S^n$ and the identity $I_1\bigcup\limits_{\partial I^n} I_2 \to J$ is continuous by definition of $\tau$, and bijective. Since the domain is compact and the codomain Hausdorff, it is a homeomorphism. Let $h:J\to S^n$ be a homeomorphism. Note that $J=I_1\bigcup\limits_{\partial I^n} I_2$ is the cubic complex $C_0\bigcup\limits_{\partial C_0} C_0$ defined at the beginning of the proof. Given $k\in \N$, $L_{k}$ is a subdivision of $J$ and we consider $|L_k|$ as a metric space with the same metric $d$. We know that $fh:|L_k|\to |K|$ admits a simplicial approximation $L_k\to K$ if and only if for every $w\in L_k$ there exists $v\in K$ such that $\ost(w)\subseteq (fh)^{-1}(\ost(v))$. Now, the diameter of the cubes $I_1$, $I_2$ is $\sqrt{n}$ and the diameter of each basic cube in $L_k$ is then $\frac{\sqrt{n}}{2^k}$. Hence the diameter of each simplex in $L_k$ is bounded above by the same number and the diameter of $\st (w)$ is smaller than or equal to $\frac{\sqrt{n}}{2^{k-1}}$ for every $w\in L_k$. If $\delta >0$ is a Lebesgue number of $\mathcal{U}=\{(fh)^{-1}(\ost (v))\}_{v\in K}$, taking $k\in \N$ such that $\frac{\sqrt{n}}{2^{k-1}}<\delta$ guarantees there is a simplicial approximation $L_k\to K$ to $fh$. 

Since $L_k$ can be $6^{n}$-starred to $L_0$, there is a simplicial map $\psi: L_0 \to K$ which is null-homotopic if and only if $fh$ is null-homotopic. Now, $C_0'$ has $3^n$ vertices and $L_0$ has $3^n+1\le 6^{n}$ vertices, so the image of $\psi$ is contained in a cone and $\psi$ is null-homotopic. Thus, $fh$ and then $f$ are null-homotopic.      
\end{proof}

\begin{coro}
If a simplicial complex is $r$-conic for every $r\ge 0$, then it is contractible.
\end{coro}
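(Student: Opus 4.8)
The plan is to read this off from Theorem \ref{main} together with Whitehead's theorem. If $K$ is $r$-conic for every $r\ge 0$, then in particular it is $6^{n}$-conic for every $n\ge 1$, so Theorem \ref{main} applies for each such $n$ and shows that $|K|$ is $n$-connected for every $n\ge 1$. Since $n$-connectedness means $\pi_k(|K|)=0$ for all $0\le k\le n$, letting $n$ tend to infinity yields $\pi_k(|K|)=0$ for every $k\ge 0$; in particular $|K|$ is nonempty and path-connected (this is already the case $n=1$) and all of its homotopy groups vanish, so $|K|$ is weakly contractible.

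It then remains to upgrade weak contractibility to genuine contractibility. Since $|K|$ is path-connected, the homotopy groups are independent of the basepoint, and since $|K|$ is the geometric realization of a simplicial complex it is a CW complex. I would therefore invoke Whitehead's theorem: a map between CW complexes inducing isomorphisms on all homotopy groups is a homotopy equivalence. Applied to the inclusion $\ast\hookrightarrow |K|$ of a point, which induces isomorphisms $\pi_k(\ast)\to\pi_k(|K|)$ for every $k$ precisely because each $\pi_k(|K|)$ is trivial, this shows that $\ast\hookrightarrow|K|$ is a homotopy equivalence, i.e.\ that $|K|$ is contractible.

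Once Theorem \ref{main} is available the argument is immediate, so there is no serious obstacle; the only point that deserves a word is that $K$ may well be infinite. For finite $K$ the statement is trivial anyway, since by the Example a finite complex that is $r$-conic for every $r$ is a cone and hence contractible. In the infinite case Whitehead's theorem still applies, as it carries no finiteness or finite-dimensionality hypothesis on the CW complexes involved. Alternatively one could sidestep Whitehead and build a contraction directly from the observation that every finite subcomplex lies in a contractible closed star, assembling the homotopy by a direct-limit argument, but invoking Whitehead is by far the cleanest route.
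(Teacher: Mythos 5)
Your proof is correct and is precisely the argument the paper intends: the corollary is stated as an immediate consequence of Theorem \ref{main}, and your deduction (apply Theorem \ref{main} for every $n$ to get that all homotopy groups of the nonempty complex $|K|$ vanish, then use Whitehead's theorem on the CW complex $|K|$ to upgrade weak contractibility to contractibility) is exactly the standard way to fill in the omitted details. Your side remarks are also accurate: Whitehead's theorem carries no finiteness hypothesis, so the infinite case poses no difficulty.
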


\begin{obs}
In the proof of Theorem \ref{main} (also Theorem \ref{teo2conexo}), the simplicial approximation $L_k\to K$ to $f:S^n\to |K|$ extends to a simplicial map from a triangulation of $D^{n+1}$ where the number of internal vertices is the number of starrings performed in the proof plus one.

Indeed if $M$ is a simplicial complex and $D\leqslant M$ is a combinatorial $d$-ball which can be $r$-starred to obtain a complex $\widetilde{M}$ by removing the simplices in $D\smallsetminus \partial D$ and attaching the cone $w\partial D$, then $M$ and $\widetilde{M}$ are subcomplexes of $N=M\cup wD$. Since $\partial D$ is a combinatorial $(d-1)$-sphere, $N$ is obtained from $\widetilde{M}$ by attaching a combinatorial $(d+1)$-ball $wD$ and the intersection $\widetilde{M} \cap wD=w\partial D$ is a combinatorial $d$-ball. That is, $\widetilde{M}$ PL-expands to $N$, denoted $\widetilde{M} \nearrow N$, with an expansion of dimension $d+1$.

%If $\widetilde{M}$ is a combinatorial $n$-ball and $d=n$, then by \cite[Theorem 14:3]{Ale} $N$ is also a combinatorial $n$-ball. In this case the vertices in $\partial N$ are those in $\partial \widetilde{M}$ minus $w$ and all the vertices of $D$.

In the proof of Theorem \ref{main}, the simplicial map $\psi :L_0\to K$ extends to the cone $vL_0$ by conicity. The complex $vL_0$ is a combinatorial $(n+1)$-ball. Since $L_k$ $6^n$-stars to $L_0$, by the previous paragraph there is a sequence of PL-expansions $L_0=A_0\nearrow A_1 \nearrow A_2 \nearrow \ldots \nearrow A_m \supseteq L_k$, each of dimension $d+1=n+1$. By coning over $L_0$ in each complex $A_i$ we obtain PL-expansions $vL_0\nearrow B_1 \nearrow B_2 \nearrow \ldots \nearrow B_m$ and by \cite[Theorem 14:3]{Ale} every complex $B_i$ is a combinatorial $(n+1)$-ball. It is easy to see that $L_k=\partial B_m$. Moreover each complex in the sequence of starrings from $L_k$ to $L_0$ is a subcomplex of $B_m$ and there is a simplicial map $B_m\to K$ which extends all the simplicial maps from these complexes to $K$ which implicitly appear in the proof. The ball $vL_0$ has a unique internal vertex and each expansion adds one internal vertex. See Figure \ref{expansion}.

\begin{figure}[h] 
\begin{center}
\includegraphics[scale=0.26]{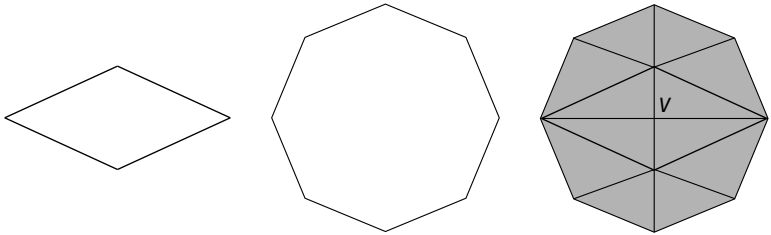}

\caption{For $n=k=1$, we have $L_0$, $L_1$ and the disk $B_2$ with $3$ internal vertices.}\label{expansion}
\end{center}
\end{figure}

\end{obs}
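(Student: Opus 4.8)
The plan is to realize the entire sequence of $6^n$-starrings taking $L_k$ down to $L_0$ as a single expanding tower of $(n+1)$-balls, and to read the internal vertices off from it. The starting point is the local fact already recorded: a single starring $M\to\widetilde M$ at a $d$-ball $D$ with apex $w$ exhibits $\widetilde M$ as a subcomplex of $N=M\cup wD$ with $\widetilde M\nearrow N$ a PL-expansion of dimension $d+1$, whose attaching region is the $d$-ball $w\partial D\subseteq\partial(wD)$. I would first isolate the bookkeeping consequence that drives the count: the two halves of $\partial(wD)$ are the $d$-balls $w\partial D$ and $D$, glued along the $(d-1)$-sphere $\partial D$; the only vertex interior to $w\partial D$ is the apex $w$, while all the genuinely new vertices (the interior vertices of $D$) lie in the interior of the complementary half $D$.

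Next I would build the tower. Writing the starrings as $L_k=P_0\to P_1\to\cdots\to P_m=L_0$ with apexes $w_1,\dots,w_m$ and balls $D_i\leqslant P_{i-1}$, I set $A_0=L_0$ and attach the cones $w_mD_m, w_{m-1}D_{m-1},\dots,w_1D_1$ one at a time, each attachment being an $(n+1)$-dimensional PL-expansion supplied by the local fact, so that $A_0\nearrow A_1\nearrow\cdots\nearrow A_m\supseteq L_k$. Coning $L_0$ from a single new apex $v$, I put $B_i=A_i\cup vL_0$; then $B_0=vL_0$ is the cone to which $\psi$ extends by conicity, and coning the expansions $A_i\nearrow A_{i+1}$ yields $B_0\nearrow B_1\nearrow\cdots\nearrow B_m$. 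Invoking Alexander's gluing theorem \cite[Theorem 14:3]{Ale}, exactly as in the proof of Theorem \ref{main}, shows each $B_i$ is a combinatorial $(n+1)$-ball, and tracking boundaries through the attachments gives $\partial B_i=P_{m-i}$; in particular $\partial B_m=P_0=L_k$, so $B_m$ is a triangulation of $D^{n+1}$ whose boundary is the domain $L_k$ of the simplicial approximation.

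Then I would extend the map. On $B_0=vL_0$ the map is the conic extension of $\psi$. At the expansion that glues $w_iD_i$ I define the simplicial map on this cone by sending $w_i$ to the conicity-vertex $v_i$ with $\varphi_i(D_i)\subseteq\st(v_i)$ produced by Lemma \ref{disco} when that very starring was performed, and sending $D_i$ by the intermediate simplicial map $\varphi_i$. This is simplicial precisely because of conicity, and along $w_i\partial D_i$ it agrees with the map already defined on the smaller ball. Assembling these definitions produces a simplicial map $B_m\to K$ whose restriction to $\partial B_m=L_k$ is the original simplicial approximation.

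Finally I would count. The cone $vL_0$ has the single interior vertex $v$. In each expansion $B_i\nearrow B_{i+1}$ the attaching $d$-ball $w_i\partial D_i$ passes from $\partial B_i$ into the interior of $B_{i+1}$, carrying with it its unique interior vertex $w_i$, while the complementary half $D_i$ becomes part of $\partial B_{i+1}$ and so contributes its interior vertices to the boundary, not the interior. Hence each expansion raises the number of interior vertices by exactly one, and after $m$ expansions $B_m$ has $m+1$ interior vertices, which is the number of starrings plus one. I expect the main obstacle to be the second paragraph: checking that the successive attachments genuinely remain PL-expansions and that the $B_i$ are balls even though the balls $D_i$ coming from different stages of the starring procedure may share simplices, which is exactly why one passes through the cone $vL_0$ and appeals to the gluing theorem rather than trying to certify the $A_i$ as balls directly.
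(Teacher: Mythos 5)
Your proposal follows the paper's argument essentially verbatim: the same local fact $\widetilde{M}\nearrow M\cup wD$ of dimension $d+1$, the same tower $L_0=A_0\nearrow A_1\nearrow\cdots\nearrow A_m\supseteq L_k$ obtained by reversing the starring sequence, the same cones $B_i=A_i\cup vL_0$ certified as combinatorial $(n+1)$-balls via Alexander's gluing theorem \cite[Theorem 14:3]{Ale}, the same extension of the map using the conicity vertices supplied by Lemma \ref{disco} at each starring, and the same count of one interior vertex ($v$, then each apex $w_i$) per step. Your extra details — the identity $\partial B_i=P_{m-i}$, the observation that only the apex of $w_i\partial D_i$ crosses into the interior while the interior vertices of $D_i$ land on the boundary, and the disjointness check that each attachment is genuinely an expansion — are elaborations of points the paper states as ``easy to see'' or leaves implicit, not a different route (modulo a harmless index slip: the map on $D_i$ and the star containing its image come from $\varphi_{i-1}$, the map before the $i$th starring, rather than $\varphi_i$).
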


\section{Connectivity of random complexes in the medial regime}

Linial and Meshulam \cite{LM} propose the following model. Given $n\in \N$ and $0\le p\le 1$, consider the complete graph on $n$ vertices and add each $2$-simplex independently with probability $p$. Meshulam and Wallach \cite{MW} generalize this to arbitrary dimension $d$ by taking the complete $(d-1)$-skeleton and adding each $d$-simplex with probability $p$. In Kahle's clique-complex model \cite{Kah1} a random graph in $n$ vertices is taken by adding each edge independently with probability $p$ and then considering the clique complex of this graph. These two models have been useful to describe different situations, but note that they leave many simplicial complexes out of the picture. In particular the cases they study exclude one another: for $d\ge 2$ there is only one clique complex of dimension $d$ in $n$ vertices with complete $(d-1)$-skeleton. Also, the clique-complex model goes against the philosophy of \cite{BCI} that networks should be modeled taking into account interactions which are not determined by pairwise interactions.

The medial regime model can be described as follows. Pick some $0<p<P<1$ and for every non-empty subset $\sigma$ of a set $V$ of $n$ points define a probability $p\le p_{\sigma} \le P$. Now construct the $0$-skeleton of a simplicial complex by adding each vertex $v\in V$ with probability $p_v$. Then construct the $1$-skeleton by adding a $1$-simplex $\sigma$ whose boundary is contained in the $0$-skeleton with probability $p_{\sigma}$, and so on. In this way every simplicial complex with vertex set contained in $V$ has a positive probability to occur. As recalled in the introduction in the medial regime the probability of a complex being simply connected tends to $1$ as $n\to \infty$ (\cite{FM}) and the Betti numbers of a random simplicial complex vanish in dimension smaller than or equal to a fixed dimension $d$ with probability $1$ as $n\to \infty$. In \cite{EZFM} it is proved that a random complex is $2$-connected with probability $1$ as $n\to \infty$. The key result they prove is the following

\begin{teo} [Even-Zohar, Farber, Mead] \cite[Proposition 5.1]{EZFM}
For every integer $r\ge 1$, the probability that a medial regime random simplicial complex is $r$-ample tends to one, as $n\to \infty$.
\end{teo}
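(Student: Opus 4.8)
The plan is to use the first-moment method, i.e.\ a union bound over all potential ``defects''. Fixing $r$, I would show that the probability that $r$-ampleness \emph{fails} tends to $0$. A failure means there is a pair $(U,L)$, where $U$ has at most $r$ vertices and $L$ is a simplicial complex with $V_L\subseteq U$ and $L\leqslant K$, for which no vertex witnesses $L$, that is, no $v\in K$ satisfies $\lk_{K(U\cup\{v\})}(v)=L$. I would bound the probability attached to each such bad pair and then sum over all of them.

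The key computation is, for a \emph{fixed} subset $U$ and a fixed pattern $L$, to estimate the probability that no witness exists. I would model the medial regime by independent coins $X_\sigma$ with $\Pr[X_\sigma=1]=p_\sigma\in[p,P]$, so that $\sigma\in K$ iff $X_\rho=1$ for every face $\rho\subseteq\sigma$. Then I would condition on the induced subcomplex $K|_U$ on $U$ (equivalently, on the coins $X_\rho$ with $\rho\subseteq U$); witnesses can exist only when $L\leqslant K|_U$, which is exactly the relevant case. For a vertex $v\notin U$, whether $v$ is a witness is determined \emph{only} by the coins $X_{\rho\cup\{v\}}$ with $\rho\subseteq U$, since $\tau\cup\{v\}\in K$ iff $\tau\in K|_U$ and $X_{\rho\cup\{v\}}=1$ for all $\rho\subseteq\tau$. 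Setting $X_{\rho\cup\{v\}}=1$ exactly when $\rho\in L$ makes $v$ a witness: downward-closure of $L$ gives $\tau\cup\{v\}\in K$ for every $\tau\in L$, whereas for $\tau\in K|_U\smallsetminus L$ the simplex fails to appear because $X_{\tau\cup\{v\}}=0$. There are at most $2^{r}$ relevant coins, so this configuration occurs with probability at least $c:=(\min\{p,1-P\})^{2^{r}}>0$, a constant independent of $n$, of $v$, of $K|_U$ and of $L$. Hence $\Pr[v\text{ is a witness}\mid K|_U]\geq c$.

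The crucial point is independence: for distinct $v,v'\notin U$ the coin families $\{X_{\rho\cup\{v\}}\}$ and $\{X_{\rho'\cup\{v'\}}\}$ are disjoint, so the events ``$v$ is a witness'' are conditionally independent given $K|_U$. Therefore the probability that \emph{no} $v\notin U$ is a witness is at most $(1-c)^{n-r}$, uniformly in $K|_U$ and $L$, and the same bound survives taking expectation over $K|_U$. Finally I would apply the union bound: there are $O(n^{r})$ choices of $U$ and, for each, at most a constant number $N_r$ of complexes $L$ on $U$, whence
\[
\Pr[K\text{ is not }r\text{-ample}]\ \leq\ O(n^{r})\,N_r\,(1-c)^{n-r}\ \xrightarrow[n\to\infty]{}\ 0,
\]
because the exponential decay dominates the polynomial factor.

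The main obstacle — the step that will require the most care — is the conditioning/independence argument of the second and third paragraphs: one must verify that conditioning on the induced subcomplex $K|_U$ simultaneously decouples the witness events across candidate vertices (they depend on disjoint coin families) and preserves a single uniform lower bound $c$ on each witness probability. Once this is in place, the remaining ingredients — the explicit witnessing configuration, the counting of the pairs $(U,L)$, and the domination of the polynomial by the exponential — are routine.
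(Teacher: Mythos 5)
This statement is not proved in the paper at all: it is imported verbatim from the cited source (Proposition 5.1 of \cite{EZFM}) and used as a black box, together with Theorem \ref{main}, to deduce Corollary \ref{proba}. So there is no in-paper proof to compare you against; what I can say is that your argument is correct, and it is essentially the argument of the cited reference: a union bound over the $O(n^r)\cdot 2^{2^r}$ pairs $(U,L)$, a uniform constant lower bound $c=(\min\{p,1-P\})^{2^r}$ on the conditional probability that a fixed vertex $v\notin U$ witnesses $(U,L)$, conditional independence of these witness events across distinct external vertices (their coin families $\{X_{\rho\cup\{v\}}\}_{\rho\subseteq U}$ are pairwise disjoint and disjoint from the coins determining $K|_U$), and the conclusion $O(n^r)\,2^{2^r}(1-c)^{n-r}\to 0$. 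One small point to clean up: your witnessing configuration ``set $X_{\rho\cup\{v\}}=1$ exactly when $\rho\in L$'' makes $v$ a vertex of $K$ only because $\emptyset\in L$, which holds for every non-void $L$ under the convention that the empty face belongs to every simplex; for the empty subcomplex $L$ (a legitimate case, whose witness must be a vertex of $K$ with no neighbours in $U$) the literal prescription gives $X_{\{v\}}=0$, so $v\notin K$ and $v$ is not a witness. Forcing $X_{\{v\}}=1$ in all cases fixes this at no cost to the bound, and with that one-line amendment the proof is complete.
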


From this and Theorem \ref{main} we deduce the following

\begin{coro} \label{proba}
Let $d\ge 0$. In the medial regime the probability of a complex being $d$-connected tends to $1$ as $n\to \infty$. 
\end{coro}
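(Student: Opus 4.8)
The plan is to combine Theorem \ref{main} with the quoted result of Even-Zohar, Farber and Mead, using the elementary observation (already recorded in the text) that $r$-ampleness implies $r$-conicity. All of the topological content has been isolated in Theorem \ref{main}, so the corollary should come out as a purely formal deduction with no essential difficulty.

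First I would fix $d\ge 0$ and produce a single integer $r=r(d)\ge 1$ with the property that every $r$-conic complex is $d$-connected. For $d\ge 1$ this is exactly Theorem \ref{main} with $r=6^{d}$. To treat $d=0$ uniformly I would simply set $r=6^{\max(d,1)}$ and use that an $m$-connected complex is $d$-connected whenever $m\ge d$ (since $\pi_k=0$ for $0\le k\le m$ forces $\pi_k=0$ for $0\le k\le d$); thus an $r$-conic complex is $\max(d,1)$-connected by Theorem \ref{main}, hence $d$-connected. In particular $r$ is a fixed constant depending only on $d$, which is the whole point: a \emph{finite} level of conicity already guarantees $d$-connectivity.

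Next I would pass from conicity to ampleness. Since $r$-ampleness implies $r$-conicity, any $r$-ample complex is $r$-conic and therefore, by the previous step, $d$-connected. Hence the event ``the random complex is $d$-connected'' contains the event ``the random complex is $r$-ample'', so its probability is bounded below by the probability of the latter.

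Finally I would invoke \cite[Proposition 5.1]{EZFM}: for the fixed integer $r=r(d)\ge 1$, the probability that a medial-regime random complex on $n$ vertices is $r$-ample tends to $1$ as $n\to\infty$. By the containment of events above, the probability of being $d$-connected is squeezed between this quantity and $1$, and therefore also tends to $1$. I do not anticipate a real obstacle: the only points needing a little care are the choice of the constant $r$ together with the degenerate case $d=0$ (handled by monotonicity of the connectivity notion), and the fact that boundedness of $r$ is what makes the result of \cite{EZFM} applicable — and that boundedness is precisely what Theorem \ref{main} supplies.
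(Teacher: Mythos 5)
Your proposal is correct and matches the paper's own (implicit) deduction: the corollary is obtained exactly by combining \cite[Proposition 5.1]{EZFM} with Theorem \ref{main} via the observation that $r$-ampleness implies $r$-conicity. Your extra care with the case $d=0$ (via $r=6^{\max(d,1)}$ and monotonicity of connectivity) is a harmless refinement of the same argument.
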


\end{document}